\newtheorem{prop}{Proposition}[section]
\newtheorem{theorem}[prop]{Theorem}
\newtheorem{lemma}[prop]{Lemma}
\newtheorem{corollary}[prop]{Corollary}
\newtheorem*{theorem*}{Theorem}
\theoremstyle{definition}
\newtheorem{remark}[prop]{Remark}
\theoremstyle{remark}
\newtheorem*{remark*}{Remark}
\newcommand{\N}{\mathbb{N}}
\newcommand{\Z}{\mathbb{Z}}
\newcommand{\E}{\mathbb{E}}
\newcommand{\Prob}{\mathbb{P}}
\newcommand{\eps}{\varepsilon}
\newcommand{\Aut}{\text{\textup{Aut}}\,}
\newcommand*{\cl}{\mathop{\rm{cl}}\nolimits}
\newcommand*{\GL}{\mathop{\rm{GL}}\nolimits}
\newcommand\flr[1]{\lfloor #1 \rfloor}
\newcommand\ceil[1]{\lceil #1 \rceil}
\newcommand{\mg}{\mathsf{mingr}}  
\newcommand\pc{p_{\mathrm c}}
\newcommand\ue{{\mathrm e}}
\newcommand\ud{{\mathrm d}}
\newcommand{\Cay}{\operatorname{Cay}}
\renewcommand{\ge}{\geqslant}
\renewcommand{\geq}{\geqslant}
\renewcommand{\le}{\leqslant}
\newcommand\redsout{\bgroup\markoverwith
   {\textcolor{red}{\rule[0.5ex]{2pt}{1pt}}}\ULon}
\title[Explicit universal minimal constants for polynomial growth of groups]{Explicit universal minimal constants\\ for polynomial growth of groups}
\date{}
\author{Russell Lyons}
\address{Indiana University, 831 E 3rd St., Bloomington, IN 47405-7106 USA}
\email{rdlyons@indiana.edu}
\author{Avinoam Mann}
\address{Einstein Institute of Mathematics,
Hebrew University, Givat Ram, Jerusalem 91904, Israel}
\email{avinoam.mann@mail.huji.ac.il}
\author{Romain Tessera}
\address{Institut de Math\'ematiques de Jussieu-Paris Rive Gauche, France}
\email{tessera@phare.normalesup.org}
\author{Matthew Tointon}
\address{School of Mathematics, University of Bristol, United Kingdom}
\email{m.tointon@bristol.ac.uk}
\thanks{R.L.\ partially supported by NSF grant DMS-1954086 and the Simons Foundation.}
\subjclass[2020]{20F18, 
20F65, 
20F69 
(primary), 
60B15 
(secondary)}
\keywords{Polynomial growth; nilpotent; random walks; percolation; superpolynomial growth; transition probabilities}
\numberwithin{equation}{section}
\begin{document}

\begin{abstract}
Shalom and Tao showed that a polynomial upper bound on the size of a single, large enough ball in a Cayley graph implies that the underlying group has a nilpotent subgroup with index and degree of polynomial growth both bounded   effectively. The third and fourth authors proved the optimal bound on the degree of polynomial growth of this subgroup, at the expense of making some other parts of the result ineffective. In the present paper we prove the optimal bound on the degree of polynomial growth without making any losses elsewhere.  As a consequence, we show that there exist explicit positive numbers $\eps_d$ such that in any group with growth at least a polynomial of degree $d$, the growth is at least $\eps_dn^d$. We indicate some applications in probability; in particular, we show that the gap at $1$ for the critical probability for Bernoulli site percolation on a Cayley graph, recently proven to exist by Panagiotis and Severo, is at least $\exp\bigl\{-\exp\bigl\{17 \exp\{100 \cdot 8^{100}\}\bigr\}\bigr\}$.
\end{abstract}

\maketitle

\tableofcontents

\section{Introduction}
We investigate the growth of finitely generated groups. Given a group $G$ that is generated by a finite subset $X$, we let $s_n(G) = s_n(G, X)$ be the number of elements of $G$ that can be expressed as a product of at most $n$ elements from $X \cup X^{-1}$. If for some $n$ we have $s_n(G) \le 2n$, then $G$ is finite. Indeed, if $G$ is infinite, then for all $n \ge 1$, there exists an element $s$ of length $2n$, which we may write as $s = uv$ where $u$ and $v$ each have length $n$. Then $u \ne v^{-1}$, so that $s_n(G) - s_{n-1}(G) \ge 2$ and $s_n(G) \ge 2n+1$. This inequality is best possible, as both $\Z$ and $(\Z/2\Z) * (\Z/2\Z)$ (with their standard generators) have $s_n(G) = 2n+1$ for all $n$.

Wilkie and van den Dries \cite{WvdD} showed that if $G$ is infinite and the inequality $s_n(G) < (n+1)(n+2)/2$ holds for some $n$, then $G$ is virtually cyclic, and (hence) has linear growth. In fact, they showed that if $s_n(G) - s_{n-1}(G) \le n$ for some $n \ge 1$, then $G$ has a cyclic subgroup of index at most $\bigl(s_n(G) - s_{n-1}(G)\bigr)^4/2$. The bound on the index was improved by Imrich and Seifter \cite{IS} to $s_n(G) - s_{n-1}(G)$, which is sharp.

Results of this type are known for higher rates of growth.
If there exist numbers $C$ and $d$ such that $s_n(G) \le Cn^d$ for all $n$, then $G$ is said to be of {\it polynomial growth}. In that case, the {\it growth degree} $\deg(G)$ of $G$ is the infimum of the numbers $d$ for which another number $C$ can be found such that the inequality above is satisfied. This degree is independent of the generator system $X$, and can be characterized equivalently by $\deg(G) := \limsup \frac{\log s_n(G)}{\log n}$. If $G$ does not have polynomial growth, then, given any numbers $C$ and $d$, the inequality $s_n(G) > Cn^d$ holds for infinitely many $n$. In other words, the upper limit above is infinite.

If $G$ is nilpotent of {\it class\/} $\cl(G) = c$ with lower central series  $G = \gamma_1(G) \vartriangleright \gamma_2(G) \vartriangleright \cdots \vartriangleright \gamma_c(G) \vartriangleright \gamma_{c+1}(G) = \{1\}$, then, as Bass \cite{bass} and Guivarc'h \cite{guiv} showed, the growth degree can be expressed as $r := \sum_{i=1}^{c} ir(i)$, where $r(i)$ is the torsion-free rank of $\gamma_i(G)/\gamma_{i+1}(G)$, i.e., the number of infinite factors in the decomposition of this quotient as a direct sum of cyclic groups. The \emph{Hirsch length} $h(G)$ of $G$ is defined to be $\sum_{i=1}^c r(i)$; obviously $h(G) \le  r \le h(G) \cdot c$. A virtually nilpotent group has the same growth degree as its nilpotent, finite-index subgroups. The above formula shows that the degree is an integer. Given a group $G$ with a finite-index, nilpotent subgroup, $H$, we define the \emph{Hirsch length} $h(G)$ of $G$ to be $h(H)$.

A celebrated theorem of Gromov \cite{gromov} established a conjecture of Milnor that a finitely generated group $G$ has polynomial growth (if and) only if $G$ is virtually nilpotent. Building on work of Kleiner \cite{kleiner}, Shalom and Tao \cite{st} subsequently gave a finitary version of this statement, showing that a polynomial upper bound on the size of just a single ball (of large enough radius) implies that a group is virtually nilpotent. Their result gives effective bounds on both the index and the degree of polynomial growth of the nilpotent subgroup, and on how large the radius needs to be in order for the theorem to hold. In relatively recent work, the third and fourth authors made the bound on the degree of polynomial growth optimal at the expense of some effectiveness elsewhere. The main aim of the present work is to obtain the optimal bound on the degree of polynomial growth without sacrificing effectiveness elsewhere. We also present some applications to probability.

Shalom and Tao's refinement of Gromov's theorem is the following.
\begin{theorem}[Shalom--Tao {\cite[Theorem 1.8]{st}}]\label{thm:st}
There exists an absolute constant $C$
such that if $G$ is a group with finite generating set $X$, and if $s_n(G, X) \le n^d$ for some $d \ge 1$ and some integer $n\ge\exp(\exp(Cd^C))$, then $G$ has a nilpotent subgroup of index $O_{n,d}(1)$
and Hirsch length and class at most $C^d$,
whence $\deg(G)\le C^{2d}$.
\end{theorem}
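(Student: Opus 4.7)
The plan is to adapt Kleiner's harmonic-function approach to Gromov's theorem into a fully quantitative form. The first step is a pigeonhole extraction of doubling at a single scale: from the single hypothesis $s_n(G,X) \le n^d$, combined with the trivial lower bound $s_m \ge m$ for $m \le n$, the geometric mean of the successive doubling ratios $s_{2m}/s_m$ over dyadic scales is forced to be controlled by roughly $2^d$. Averaging over a block of scales then produces some intermediate $m$ at which $s_{Km}(G,X) \le K^{O(d)} s_m(G,X)$ for a convenient large constant $K$, giving strong polynomial doubling at a single working scale.

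The second step, which is the heart of the argument, is to show that the space
\[
\mathcal{H}_k(G) := \bigl\{\, f : G \to \R \ :\ \mu * f = f,\ |f(x)| \le C_f (1+|x|)^k \,\bigr\},
\]
where $\mu$ is the uniform measure on $X\cup X^{-1}$, has dimension bounded effectively in terms of $k$ and the doubling constant produced above. The ingredients are a discrete Poincar\'e inequality, valid in any regime with polynomial doubling at the relevant scale, and a Mok--Colding--Minicozzi-style reverse Poincar\'e/dimension-counting argument transplanted to the Cayley graph. Applied at $k=1$, this already gives a finite-dimensional space of Lipschitz $\mu$-harmonic functions modulo constants.

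The third step exploits the $G$-action on $\mathcal{H}_k(G)$ for suitable $k$ to produce a linear representation of $G$ of bounded dimension. From this representation, one extracts either a nontrivial homomorphism $\phi : G \to \R$, in which case one descends to $\ker \phi$, whose Hirsch length is strictly smaller, or else a proper finite-index subgroup $G'$ of index bounded in terms of $n$ and $d$, in which case one passes to $G'$ and iterates. After at most $O(d)$ such induction steps, one arrives at a virtually nilpotent subgroup with both Hirsch length and nilpotency class at most $C^d$, and the Bass--Guivarc'h formula $\deg \le h \cdot \cl$ then yields $\deg(G) \le C^{2d}$.

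The main obstacle is maintaining effectivity throughout the induction. Each pass consumes part of the hypothesis: the Poincar\'e inequality requires $n$ to exceed the working scale by a large factor, the dimension-counting argument loses an exponential in $d$, and the passage to a kernel or a finite-index subgroup further degrades both the doubling exponent and the available scale. Shalom--Tao's contribution is to make each of these reductions quantitative and to show that iterating them at most $O(d)$ times still leaves slack provided $n \ge \exp(\exp(Cd^C))$. The index of the nilpotent subgroup, by contrast, remains $O_{n,d}(1)$ rather than explicit, because the solvability of the image in the third step is extracted via an ineffective consequence of the Tits alternative; this is exactly the ineffectivity captured by the $O_{n,d}(1)$ in the statement.
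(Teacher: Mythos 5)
This theorem is cited from Shalom--Tao \cite[Theorem 1.8]{st}; the paper does not prove it but imports it as a black box (and explains in the surrounding text how to read off some of its features), so there is no internal proof to compare your attempt against. That said, your sketch is a recognizable and broadly faithful outline of the actual Shalom--Tao argument: pigeonhole extraction of single-scale doubling from the one-ball hypothesis, a quantitative Kleiner/Colding--Minicozzi bound on the dimension of the space of polynomially growing $\mu$-harmonic functions via a Poincar\'e inequality valid under doubling, use of the resulting finite-dimensional representation to produce either a homomorphism onto a large cyclic group or a proper finite-index subgroup, and iteration with careful bookkeeping of the scale.

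Where you go wrong is in locating the source of the $O_{n,d}(1)$ ineffectivity. The paper states explicitly, immediately after the theorem, that Shalom and Tao obtain the bound on the index via an ineffective \emph{compactness argument}, and that they themselves note an effective replacement would be ``substantially longer'' and yield an Ackermann-type bound in $d$. This is not an ineffectivity in extracting virtual solvability via the Tits alternative, as you assert; in the harmonic-function approach the linear representations produced preserve a Hilbert-space structure and so land in compact groups, where one argues via the structure of compact Lie groups rather than via Tits. Attributing the loss to the Tits alternative misdescribes both where the argument is nonconstructive and why. Apart from this, the overall plan you describe is essentially correct as a summary of the cited proof, though it is of course only a high-level sketch and does not supply the quantitative Poincar\'e and reverse-Poincar\'e inequalities, the bookkeeping of scales across the induction, or the control on Hirsch length and class that the precise constant $C^d$ requires.
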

Here and elsewhere, we adopt the notational convention that if $X$ is a real quantity and $z_1,\ldots,z_k$ are parameters, then $O_{z_1,\ldots,z_k}(X)$ denotes a quantity that is at most a constant multiple of $X$, with the constant depending only on the parameters $z_1,\ldots,z_k$.

\cref{thm:st} says that a polynomial upper bound on the size of a single, large enough ball is enough to imply that a group is virtually nilpotent, and to give some quantitative control over the complexity of the virtual nilpotency. A bound on $C$ can be computed explicitly from the proof; the authors assert that one such bound should be $100$. The bound $O_{n,d}(1)$ on the index could in principle be made effective, but the authors instead use an ineffective compactness argument, saying that the corresponding effective argument would be `substantially longer' and result in a bound of Ackermann type in $d$.

\begin{remark*}In his original paper, Gromov applied a compactness argument together with his own theorem to obtain a similar conclusion to \cref{thm:st} \cite[\S8]{gromov}. This yields ineffective bounds and requires the stronger hypothesis that $|s_n(G)|\le n^d$ for some $d \ge 1$ and all $n=2,\ldots,n_0$, for some $n_0=n_0(d)$.
\end{remark*}

Given the polynomial of degree $d$ appearing in the hypothesis of \cref{thm:st}, it is natural to wonder whether $\deg(G)$ should also be at most $d$. This amounts to asking whether a group can grow like a polynomial of degree $d$ at small scales and then accelerate to grow like a polynomial of higher degree at large scales. It turns out that if one considers instead a `relative' condition of the form $|s_n(G)|\le Cn^d|s_1(G)|$, then this can indeed occur (see \cite[Example 1.11]{tao} for details). However, the third and fourth authors showed that this does not occur in the context of \cref{thm:st} by proving the following result, which verified a conjecture of Benjamini. We write $\N$ for the set of strictly positive integers.
\begin{theorem}[{\cite[Theorem 1.11]{tt.proper.progs}}]\label{thm:tt.orig}
For every $d\in\N$, there exists $\eps_d>0$ such that if $G$ is a group with finite generating set $X$ and if $s_n(G, X) < \eps_dn^d$ for some $n\in\N$, then $s_m(G, X)\le O_d\bigl((m/n)^{d-1}s_n(G, X)\bigr)$ for every integer $m\ge n$.
\end{theorem}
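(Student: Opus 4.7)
My plan is to combine Shalom--Tao's theorem (\cref{thm:st}) with a compactness argument in the space of marked groups to extract a uniform constant $\eps_d$. First, we may assume $n$ is at least the Shalom--Tao threshold $\exp(\exp(Cd^C))$; smaller $n$ can be handled by separate elementary considerations (e.g.\ if $\eps_d$ is small enough to force $s_n(G,X) < 2n+1$, then $G$ is finite by Wilkie--van den Dries--type arguments and the conclusion is immediate). Applying \cref{thm:st} to the bound $s_n(G, X) < \eps_d n^d \le n^d$, we deduce that $G$ has a nilpotent subgroup $H$ of finite index, with Hirsch length $h(H)$ and class $\cl(H)$ bounded by functions of $d$ alone.

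By Bass--Guivarc'h, the growth degree $D := \deg(G) = \deg(H)$ is an integer determined by the ranks of the lower central series quotients of $H$, and there exist constants $c_{G,X}, C_{G,X}>0$ (a priori depending on $G$ and $X$) such that
\[
c_{G,X}\,m^D \le s_m(G, X) \le C_{G,X}\,m^D \quad \text{for all } m\in\N.
\]
The heart of the argument is a uniformity statement in the space $\mathcal{M}$ of marked groups on at most $|X|$ generators that are virtually nilpotent with Hirsch length and class bounded by the outputs of \cref{thm:st}: the ratio $C_{G,X}/c_{G,X}$ is bounded above, and $c_{G,X}$ is bounded below, by positive quantities depending only on $d$. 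This should follow from compactness of $\mathcal{M}$ in the pointed Gromov--Hausdorff topology, combined with continuity of $(G,X)\mapsto \lim_{m\to\infty} s_m(G,X)/m^D$ (a consequence of Pansu's theorem).

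Equipped with these uniform bounds, choose $\eps_d$ strictly smaller than the uniform lower bound on $c_{G,X}$ over groups in $\mathcal{M}$ of each possible degree $D\ge d$; there are only finitely many such $D$ because Hirsch length and class are bounded. The hypothesis $s_n(G,X) < \eps_d n^d$ is then incompatible with $D\ge d$, forcing $D\le d-1$, and we conclude via
\[
s_m(G, X) \le C_{G,X}\,m^D = C_{G,X}\,n^D\Bigl(\tfrac{m}{n}\Bigr)^D \le \frac{C_{G,X}}{c_{G,X}}\, s_n(G, X) \Bigl(\tfrac{m}{n}\Bigr)^{d-1},
\]
absorbing the bounded factor $C_{G,X}/c_{G,X}$ into the $O_d$.

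The main obstacle will be rigorously establishing the uniform lower bound on the Pansu constants $c_{G,X}$ across the whole space $\mathcal{M}$. Making this step quantitative is precisely the difficulty that motivates the present paper, so here one would have to settle for an ineffective argument via compactness, and one would not recover an explicit value of $\eps_d$.
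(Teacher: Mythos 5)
Note first that the paper does not actually supply its own proof of this statement: it is quoted as \cite[Theorem 1.11]{tt.proper.progs}, where the proof passes through the Breuillard--Green--Tao structure theorem for approximate groups and the resulting control of growth \emph{at all scales} via coset nilprogressions. The present paper's new contribution is the weaker \cref{thm:tt}, obtained elementarily from Shalom--Tao together with the lower bound of \cref{thm:v.nilp.min.growth}, with no compactness and no statement about $s_m$ for $m>n$.

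Your argument contains a gap that is genuinely different from the effectiveness concern you raise at the end. The final displayed chain requires the ratio $C_{G,X}/c_{G,X}$, with $c_{G,X}=\inf_m s_m(G,X)/m^D$, $C_{G,X}=\sup_m s_m(G,X)/m^D$ and $D=\deg(G)$, to be bounded uniformly over the relevant family of marked groups; this is false. Take $G_k=\Z\times(\Z/k\Z)$ with $X=\{(1,0),(0,1)\}$. These are virtually cyclic, of growth degree $D=1$, with Hirsch length and class equal to $1$, yet $s_m(G_k,X)=2m^2+2m+1$ for $m<k/2$ while $s_m(G_k,X)\sim 2km$ for $m\gg k$, so $c_{G_k,X}=5$ (attained at $m=1$) while $C_{G_k,X}\sim 2k\to\infty$. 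The theorem is nonetheless true for these groups: taking $d=2$, the hypothesis $s_n<\eps_2 n^2$ forces $n\gtrsim k/\eps_2$, and at such scales $s_n/n$ is already comparable to $C_{G_k,X}$, giving $s_m/s_n\lesssim m/n$ directly. Your inequality $s_m\le C_{G,X}m^D\le(C_{G,X}/c_{G,X})\,s_n(m/n)^D$ discards exactly this information by comparing to the global infimum $c_{G,X}$ rather than to $s_n/n^D$ itself. A correct proof has to show that the hypothesis forces $n$ beyond all ``transition scales'' of the group and then control the growth between $n$ and $m$ from that anchor --- this is precisely what the nilprogression machinery in \cite{tt.proper.progs} provides, and it is not a consequence of continuity of the Pansu limit. (Separately: even the uniform lower bound on $c_{G,X}$ that you do flag cannot be extracted from compactness in the way you suggest, because $c_{G,X}$ is an infimum of locally constant functions of the marked group, hence upper rather than lower semicontinuous --- the wrong direction for a compactness argument to yield a positive lower bound. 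That lower bound is nevertheless true, and is exactly the content of \cref{thm:v.nilp.min.growth}, which the paper proves by a direct computation rather than by compactness.)
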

\cref{thm:tt.orig} relies on Breuillard, Green, and Tao's structure theorem for approximate groups \cite{bgt}, and as such does not give an effective computation of $\eps_d$. The bound $O_d\bigl((m/n)^{d-1}s_n(G, X)\bigr)$ is also ineffective in the original reference for \cref{thm:tt.orig}, but in forthcoming work, the third and fourth authors will give an improved proof of \cref{thm:tt.orig} that results in an effective bound.

As an immediate consequence of \cref{thm:st,thm:tt.orig}, we obtain the optimal bound on $\deg(G)$ in the Shalom--Tao theorem, as follows.
\begin{corollary}\label{thm:tt}
For every $d\in\N$, there exists $\eps_d>0$ such that if $G$ is a group with finite generating set $X$, and if $s_n(G, X) < \eps_dn^d$ for some $n\in\N$, then $G$ has a nilpotent subgroup of index $O_{n,d}(1)$, and $\deg(G)\le d-1$.
\end{corollary}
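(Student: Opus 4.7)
The corollary is announced as an immediate consequence of Theorems~\ref{thm:st} and~\ref{thm:tt.orig}, and my plan is simply to chain these two results. Theorem~\ref{thm:tt.orig} upgrades the one-scale hypothesis $s_n(G,X) < \eps_d n^d$ to a global polynomial bound of degree $d-1$; this bound alone yields $\deg(G) \le d-1$, and it also sets up the right polynomial estimate at a larger scale to which Theorem~\ref{thm:st} can be applied to extract the finite-index nilpotent subgroup.

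For the degree bound, I would take $\eps_d$ to be the constant furnished by Theorem~\ref{thm:tt.orig} (shrunk so that $\eps_d \le 1$ if necessary). For any $G$, $X$, $n$ satisfying the hypothesis, that theorem produces
\[
  s_m(G,X) \le O_d\bigl((m/n)^{d-1} s_n(G,X)\bigr) \le O_d\bigl(\eps_d\, n\, m^{d-1}\bigr)
  \qquad (m \ge n).
\]
The right-hand side is a polynomial in $m$ of degree $d-1$ whose coefficient depends only on $n$ and $d$, which forces $\deg(G) \le d-1$.

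To obtain the nilpotent subgroup, I would then feed the displayed bound back into Theorem~\ref{thm:st}. Writing $C_d'$ for the implicit $O_d$-constant and $C$ for the Shalom--Tao absolute constant, I would choose any integer
\[
  m \ge \max\bigl(n,\ \exp(\exp(Cd^C)),\ \lceil C_d'\,\eps_d\, n\rceil\bigr),
\]
so that $s_m(G,X) \le C_d' \eps_d n \cdot m^{d-1} \le m^d$ and $m$ lies above the Shalom--Tao threshold. Theorem~\ref{thm:st} applied at scale $m$ with exponent $d$ then yields a nilpotent subgroup of index $O_{m,d}(1)$; since $m$ can be taken as an explicit function of $n$ and $d$ alone, this is $O_{n,d}(1)$, as required.

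The only real work is the bookkeeping above — picking $m$ large enough for Theorem~\ref{thm:st} to apply while keeping it bounded by a function of $n$ and $d$ — together with a moment's care for the degenerate case $d=1$, in which Theorem~\ref{thm:tt.orig} forces $G$ to be finite and both conclusions are trivial. I do not anticipate a genuinely new mathematical obstacle, as the substantive content of the corollary sits entirely in the two preceding theorems.
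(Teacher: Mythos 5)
Your proposal is correct and matches the paper's intended (but unstated) proof: the paper presents Corollary~\ref{thm:tt} explicitly as ``an immediate consequence of'' Theorems~\ref{thm:st} and~\ref{thm:tt.orig}, and your chaining of the two --- using Theorem~\ref{thm:tt.orig} to upgrade the one-scale hypothesis to a degree-$(d-1)$ polynomial bound, reading off $\deg(G)\le d-1$, and then applying Theorem~\ref{thm:st} at a sufficiently large scale $m$ determined by $n$ and $d$ to extract the finite-index nilpotent subgroup --- is exactly the argument the authors have in mind. The one cosmetic caveat is that the constant $C_d'$ is ineffective, so $m$ is a function of $n$ and $d$ but not literally ``explicit''; this is harmless, since $O_{n,d}(1)$ does not require effectivity.
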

Note that, although the hypothesis $s_n(G, X) < \eps_dn^d$ in this result might at first glance appear rather stronger than the hypothesis $s_n(G, X) \le n^d$ of \cref{thm:st}, provided $n>1/\eps_{d+1}$, the latter bound implies the former with $d+1$ in place of $d$.

It appears to be beyond the reach of current methods to give an explicit value of $\eps_d$ in \cref{thm:tt.orig}. Nonetheless, in the present work we obtain \cref{thm:tt} directly and elementarily from \cref{thm:st}, bypassing the Breuillard--Green--Tao theorem completely and making $\eps_d$ effective in \cref{thm:tt} without any losses elsewhere. This leads in turn to effective constants $\eps_d$ in the following trivial consequence of \cref{thm:tt}.
\begin{corollary}\label{cor:min.growth}
Let $d\in\N$, and suppose that $G$ is a group satisfying $\deg(G)\ge d$ and $X$ is a finite generating set for $G$. Then $s_n(G, X)\ge\eps_dn^d$ for every $n\in\N$, where $\eps_d>0$ is the constant given by \cref{thm:tt}.
\end{corollary}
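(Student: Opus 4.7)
The plan is simply to take the contrapositive of \cref{thm:tt}. Fix $d\in\N$ and let $\eps_d>0$ be the constant supplied by \cref{thm:tt} for this value of $d$. I would argue by contradiction: suppose some finitely generated group $G$ with $\deg(G)\ge d$ admits a finite generating set $X$ and an integer $n\in\N$ such that $s_n(G,X)<\eps_dn^d$. Then \cref{thm:tt} applies, giving that $G$ is virtually nilpotent with $\deg(G)\le d-1$, which contradicts the hypothesis $\deg(G)\ge d$.

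A small verification is that the hypothesis $\deg(G)\ge d$ is meaningful regardless of whether $G$ has polynomial growth: if $G$ fails to have polynomial growth then, as recalled in the introduction, $\limsup_n\frac{\log s_n(G,X)}{\log n}=\infty$, so $\deg(G)\ge d$ holds vacuously, and we still derive the desired contradiction from \cref{thm:tt}. There is no real obstacle here; the content of the corollary lies entirely in \cref{thm:tt}, and what remains is only a one-line logical reformulation. In particular, effectiveness of $\eps_d$ in \cref{cor:min.growth} is inherited directly from the effective value produced in the proof of \cref{thm:tt}.
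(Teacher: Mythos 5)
Your proof is correct and matches the paper's intent exactly: the paper labels \cref{cor:min.growth} a ``trivial consequence'' of \cref{thm:tt}, and the contrapositive argument you give is precisely that trivial deduction. Nothing further was needed.
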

This has particular relevance to the study of probability on groups, where lower bounds on growth have numerous applications.

\subsection*{Main new results} Our first main result deals with groups of growth exactly $d$, and for that reason it does not rely on the Shalom--Tao theorem.

\begin{theorem}\label{thm:v.nilp.min.growth}
Let $d\in\N$, and suppose $G$ is a virtually nilpotent group with polynomial growth of degree $d$. Let $X$ be a finite generating set for $G$. Then
\[
s_n(G,X)\ge\frac{n^d}{2^{d(d+2)}g\bigl(h(G)\bigr)^d}
\ge\frac{n^d}{2^{d(d+2)}g(d)^d}
\]
for every $n\in\N$, where $g(k)$ is the maximum order of a finite subgroup of $\GL_k(\Z)$.
\end{theorem}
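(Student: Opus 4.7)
The plan is to reduce to a torsion-free nilpotent subgroup $N\le G$ of controlled finite index, and then to estimate $s_n(N,\cdot)$ via an explicit Mal'cev basis and a commutator powering identity.

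First, find $N\triangleleft G$ torsion-free nilpotent of finite index $K$. Such an $N$ exists: take a finite-index nilpotent subgroup, remove its (finite, characteristic) torsion, and intersect conjugates to obtain a normal torsion-free nilpotent subgroup. The conjugation action of $G/N$ on the Mal'cev completion of $N$ embeds $G/N$ as a finite subgroup of $\GL_{h(G)}(\Z)$, so $K\le g(h(G))$. A Schreier transversal of $N$ in $G$ consisting of geodesic coset representatives (each of $X$-word-length at most $K-1$) yields a generating set $Y\subseteq N$ whose elements have $X$-word-length at most $2K-1$, giving $s_n(G,X)\ge s_{\lfloor n/(2K-1)\rfloor}(N,Y)$.

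Next, for $N$ torsion-free nilpotent of class $c$, Hirsch length $h$, and degree $d=\sum_{i=1}^c ir_i$, build a Mal'cev basis $(v_k)_{k=1}^h$ whose $k$-th element $v_k$ is an iterated commutator in $Y$ of weight $w_k$ (where each $i\in\{1,\ldots,c\}$ occurs as $w_k$ for exactly $r_i$ indices~$k$), so $\sum_k w_k=d$. The key identity in class-$c$ nilpotent groups,
\[
[a_1,\ldots,a_{c-1},a_c^m]=[a_1,\ldots,a_c]^m,
\]
holds \emph{exactly}: the Baker--Campbell--Hausdorff correction terms lie in $\gamma_{c+1}=1$ and the $c$-fold commutator is central. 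Applied iteratively in each slot, this gives $v_k^m=[y_{k,1}^{t_1},\ldots,y_{k,w_k}^{t_{w_k}}]$ for any factorisation $t_1\cdots t_{w_k}=m$; taking $t_j\approx m^{1/w_k}$ yields $|v_k^m|_Y\le 2^{w_k+2}m^{1/w_k}$.

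Finally, the Mal'cev bijection $(a_k)\mapsto\prod_kv_k^{a_k}$ from $\Z^h$ onto $N$ combined with sub-additivity of word length yields $|\prod_kv_k^{a_k}|_Y\le\sum_k2^{w_k+2}|a_k|^{1/w_k}$. Setting $|a_k|\le(n/(h\cdot 2^{w_k+2}))^{w_k}$ makes each summand at most $n/h$ and so the total at most $n$, giving
\[
s_n(N,Y)\ge\prod_k\Bigl(\frac{n}{h\cdot 2^{w_k+2}}\Bigr)^{w_k}=\frac{n^d}{h^d\cdot 2^{\sum_kw_k(w_k+2)}}\ge\frac{n^d}{h^d\cdot 2^{d(d+2)}},
\]
using $\sum_kw_k^2=\sum_ii^2r_i\le cd\le d^2$. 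Combining with Stage 1 and $K\le g(h(G))$ gives the stated bound, modulo absorbing the factor $(2K-1)^dh^d$ into $g(h(G))^d$. The main obstacle is managing constants precisely: one needs a sharper commutator-powering estimate (or a tighter Schreier step) to shave the residual factor $(2h)^d$, and one must handle torsion in the quotients $\gamma_i/\gamma_{i+1}$ carefully when selecting commutators to form a Mal'cev basis.
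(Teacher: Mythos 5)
Your strategy — reduce to a torsion-free nilpotent normal subgroup of controlled index via a Schreier transversal, then bound the nilpotent ball directly — matches the paper's outer structure (\cref{prop:index.bound} followed by the Schreier step). The nilpotent-group core, however, is genuinely different: you work with a Mal'cev basis and distortion estimates for all commutator weights simultaneously, whereas the paper's \cref{prop:nilp.min.growth} proceeds by induction on growth degree, at each step quotienting by the central cyclic subgroup generated by a single weight-$c$ commutator. That design is deliberate: it means the commutator-powering identity is only ever invoked at the top step $\gamma_c$, where $\gamma_{c+1}$ is trivial and the identity really is exact.

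This points to the main gap in your sketch. The claim that $v_k^m=[y_{k,1}^{t_1},\ldots,y_{k,w_k}^{t_{w_k}}]$ holds \emph{exactly} for any factorisation $t_1\cdots t_{w_k}=m$ is true only when $w_k=c$. For $w_k<c$, the multilinearity of \cref{lem:comms.linear} holds only modulo $\gamma_{w_k+1}(G)$, which is nontrivial, so there are correction terms. Those corrections have higher weight and hence better distortion, so the Mal'cev coordinate argument can ultimately be patched, but this is a substantive step that your sketch omits; as written, the distortion bound $|v_k^m|_Y\le 2^{w_k+2}m^{1/w_k}$ does not follow. Second, the assertion $K\le g\bigl(h(G)\bigr)$ is not justified: the conjugation action of $G/N$ on the Mal'cev completion of $N$ need not be faithful, and controlling the kernel is exactly the content of \cite[Theorem 9.8]{mann.book} which the paper invokes via \cref{prop:index.bound}. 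Moreover, passing first to the normal core of some finite-index torsion-free nilpotent subgroup can inflate the index well beyond $g\bigl(h(G)\bigr)$ before any faithfulness argument is available, so the order of operations matters. Finally, as you acknowledge, the constant you obtain is off by a factor of roughly $(2h)^d$; the paper's inductive argument produces the cleaner $n^d/2^{d^2}$ for the torsion-free nilpotent case, which combines exactly with the index-$g\bigl(h(G)\bigr)$ Schreier step to give $n^d/\bigl(2^{d(d+2)}g(h(G))^d\bigr)$, so the theorem as stated is not yet recovered by your route.
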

An upper bound for $g(k)$ was given already by Minkowski \cite{mink} in 1887. One such bound is
\begin{equation}\label{eq:g(d)}
g(k)\le(2k)!
\end{equation}
(see equation (16) on p.~175 of \cite{newman}). See also \cite{feit} and the remarks about $g(k)$ on pp.~88--89 of \cite{mann.book}.

Combining Theorem \ref{thm:st} with Theorem \ref{thm:v.nilp.min.growth}, we deduce an effective version of \cref{thm:tt}, as follows.

\begin{theorem}\label{cor:effective.min.growth}
We may take
\begin{equation} \label{eq:epsd.def}
\eps_d=\min\left\{\frac{1}{2^{3C^{4d}}g(C^{d})^{C^{2d}}},\frac{1}{\exp(d\exp(Cd^C))}\right\}
\end{equation}
in \cref{thm:tt}, and hence also in \cref{cor:min.growth}. Moreover, this yields the same bound on the index of the nilpotent subgroup as \cref{thm:st}.
\end{theorem}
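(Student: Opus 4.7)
The plan is to combine Theorems \ref{thm:st} and \ref{thm:v.nilp.min.growth} via a short contradiction argument, with the two terms in the minimum defining $\eps_d$ playing distinct roles: the second ensures that the hypothesis of the Shalom--Tao theorem is met, while the first ensures that $\deg(G)\ge d$ is impossible.

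First I would verify that the hypothesis $s_n(G,X)<\eps_d n^d$ triggers \cref{thm:st}. Since $s_n(G,X)\ge 1$, the hypothesis forces $n^d>\eps_d^{-1}\ge\exp(d\exp(Cd^C))$, whence $n>\exp(\exp(Cd^C))$; and since $\eps_d<1$, we have $s_n(G,X)<n^d$. So \cref{thm:st} applies, yielding a nilpotent subgroup of $G$ of index $O_{n,d}(1)$ (which is precisely the `Moreover' clause of the statement), together with the bounds $h(G)\le C^d$, $\cl(G)\le C^d$, and $\deg(G)\le C^{2d}$. In particular, $G$ is virtually nilpotent, so \cref{thm:v.nilp.min.growth} is available.

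Next, I would argue by contradiction. Suppose $d':=\deg(G)\ge d$, so that $d\le d'\le C^{2d}$. Applying \cref{thm:v.nilp.min.growth} to $G$ with growth degree $d'$ then yields
\[
s_n(G,X)\ge\frac{n^{d'}}{2^{d'(d'+2)}\,g(h(G))^{d'}}\ge\frac{n^d}{2^{3C^{4d}}\,g(C^d)^{C^{2d}}}\ge\eps_d n^d,
\]
where the second inequality uses $n\ge 1$, $d'\ge d$, the monotonicity of $g$, the estimate $d'(d'+2)\le 3C^{4d}$ (valid since $C^{2d}\ge 1$), and $g(C^d)\ge 1$; the third inequality is the first term in the minimum defining $\eps_d$. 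This contradicts the assumption $s_n(G,X)<\eps_d n^d$, forcing $\deg(G)\le d-1$ as required.

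There is no substantive obstacle here: the proof is essentially bookkeeping, propagating the worst-case constants from \cref{thm:st} through the explicit lower bound of \cref{thm:v.nilp.min.growth}. The only subtlety is to check that the two monotonicity/comparison steps in the displayed chain hold uniformly over the admissible ranges of $d'$ and $h(G)$, which they do because every relevant quantity is at least $1$.
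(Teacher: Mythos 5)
Your proof is correct and follows essentially the same approach as the paper's. The paper factors the argument through a separate intermediate statement (\cref{thm:effective.tt}), in whose proof the inequality from \cref{thm:v.nilp.min.growth} is compared directly against the hypothesis to conclude $\deg(G)<d$, and then observes that the second term in the minimum makes the hypothesis vacuous for $n<\exp(\exp(Cd^C))$; you collapse these two steps into one and phrase the key comparison as a contradiction, but the calculations and the roles assigned to the two terms in the minimum are identical.
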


The second term
in the expression of $\eps_d$ is directly related to the lower bound on $n$
in the Shalom--Tao theorem. We observe that the second term is
asymptotically smaller than the first one (after taking logs of the reciprocals twice, the first one becomes $\simeq d$, while the
second one becomes $\simeq d^C$).

\begin{remark}
Define $\mg(d) := \inf \{ s_n(G, X)n^{-d} \}$, where the infimum is taken over all $n \in \N$ and all virtually nilpotent groups $G$ with polynomial growth of degree $d$ and generating sets $X$. Obviously $\mg(1) = 2$. We do not know the values of $\mg(d)$ for other $d$. \Cref{thm:v.nilp.min.growth} gives a lower bound on $\mg(d)$. 
For an upper bound, note that when $\Z^d$ is generated by $d$ elements, we have $\mg(d) \le \lim_{n \to\infty} s_n(\Z^d)/n^d = 2^d/d!$: up to terms of order $n^{d-1}$, $s_n(G)$ is the volume of a hyperoctahedron, which, in turn, is $2^d$ times the volume of its intersection with the nonnegative orthant. We can do better, however: For $d \ge 2$, consider the affine Coxeter group $\widetilde B_d$, which has growth degree $d$ and so-called exponents $1, 3, \ldots, 2d-1$; see \cite[Appendix A1]{bb}. By a formula of Bott (see \cite[Theorem 7.1.10]{bb} or \cite[Theorem 3.8]{stein}), we have for the Coxeter generators, 
\[
\sum_{n \ge 0} s_n(\widetilde B_d) z^n 
= 
\frac1{(1-z)^{d+1}} \prod_{k=1}^d \frac{1 - z^{2k}}{1 - z^{2k-1}}
\]
for $|z| < 1$.
By \cite[Proposition 51]{pansu} and \cite[Lemma 3.2]{stoll}, we have for every group $G$ of polynomial growth degree $d$ that
\[
\lim_{n \to\infty} s_n(G) n^{-d}
=
\frac1{d!} \lim_{z \uparrow 1} (1-z)^{d+1} \sum_{n \ge 0} s_n(G) z^n,
\]
whence
\[
\mg(d)
\le
\lim_{n \to\infty} s_n(\widetilde B_d) n^{-d}
=
\frac1{d!} \frac{(2d)!!}{(2d-1)!!}.
\]
Note, in particular, that ${(2d)!!}/{(2d-1)!!} \sim \sqrt{\pi d}$ as $d \to\infty$.
In fact, there are a few other Coxeter groups that give still better bounds: $\mg(2) \le \lim_{n \to\infty} s_n(\widetilde G_2) n^{-2} = (12/5)/2!$, $\mg(6) \le \lim_{n \to\infty} s_n(\widetilde E_6) n^{-6} = (324/77)/6!$, $\mg(7) \le \lim_{n \to\infty} s_n(\widetilde E_7) n^{-7} = (9216/2431)/7!$, and $\mg(8) \le \lim_{n \to\infty} s_n(\widetilde E_8) n^{-8} = (99532800/30808063)/8!$; again, see \cite[Appendix A1]{bb} for the definitions and exponents of these groups.
\end{remark}

\begin{remark*}The situation for groups of exponential growth is known to be quite different from the situation for groups of polynomial growth described by \cref{cor:min.growth}. On the one hand, some classes of groups are known to have uniformly exponential growth over all generating sets, in the sense that there is a constant $c > 1$ depending only on the group such that the ball of radius $n$ with respect to an arbitrary generating set has at least $c^n$ elements; indeed, the same $c > 1$ sometimes exists even for an entire class of groups. On the other hand, it is known that there are groups of exponential growth whose rate of growth on the exponential scale is arbitrarily small for certain sets of generators. See, e.g., \cite{bucher-talabutsa} for results and history of exponential growth. There is much less knowledge for groups of intermediate growth: it is not even known whether there are such groups whose balls of radius $n$ have asymptotically fewer than $e^{c \sqrt n}$ elements.
\end{remark*}

\subsection*{Vertex-transitive graphs}
Trofimov \cite[Theorem 2]{trof} famously extended Gromov's theorem to vertex-transitive graphs of polynomial growth, showing that any such graph has a quotient that looks roughly like a virtually nilpotent Cayley graph in a certain precise sense. Woess \cite[Theorem 1]{woess} subsequently gave a simple proof of this result using the theory of topological groups. Inspired by Woess's proof, and applying a version of the Breuillard--Green--Tao theorem for locally compact groups due to Carolino \cite{Ca}, the third and fourth authors of the present work gave a finitary version of Trofimov's theorem that allowed them to extend \cref{thm:tt.orig} to vertex-transitive graphs \cite[Corollary 1.5]{tt.trof}.

Unfortunately, we are not aware of an effective result for locally compact groups that could be used to bypass Carolino's result in the same way that we use Shalom and Tao's result to bypass the Breuillard--Green--Tao theorem in our proof of \cref{thm:tt}. Nonetheless, using Trofimov's result we can at least obtain the following generalisation of \cref{thm:v.nilp.min.growth},
in which we write $s_n(\Gamma)$ for the number of vertices inside a ball of radius $n$ in a vertex-transitive graph $\Gamma$.

\begin{corollary}\label{cor:VT.min.growth}\label{cor:VT.min.growth.detailed}
Let $d\in\N$, and suppose $\Gamma$ is a vertex-transitive graph with polynomial growth of degree exactly $d$. Then
\[
s_n(\Gamma)\ge\frac{n^d}{2^{d(d+2)}g(d)^{d+1}}
\]
for every $n\in\N$.
\end{corollary}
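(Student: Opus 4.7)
The plan is to use Trofimov's theorem \cite{trof}, in its topological-group formulation due to Woess \cite{woess}, to reduce to the Cayley graph case and then apply \cref{thm:v.nilp.min.growth}. This yields a closed, vertex-transitive subgroup $H\leq\Aut(\Gamma)$ and a compact open normal subgroup $K\triangleleft H$, with all $K$-orbits on $V(\Gamma)$ finite of common size $m$, such that $H/K$ is a discrete virtually nilpotent group and the quotient graph $\Gamma/K$ (whose vertices are the $K$-orbits, with two orbits adjacent iff they contain adjacent vertices of $\Gamma$) is a Cayley graph of $H/K$ for some finite symmetric generating set.

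Because the fibres have bounded size $m$, the graph $\Gamma/K$ still has polynomial growth of degree exactly $d$, and hence $h(H/K)\leq d$. Applying \cref{thm:v.nilp.min.growth} to the Cayley graph $\Gamma/K$ gives
\[
s_n(\Gamma/K)\;\geq\;\frac{n^d}{2^{d(d+2)}g(d)^d}.
\]
To transfer this bound to $\Gamma$, I would use that the projection $\pi\colon V(\Gamma)\to V(\Gamma/K)$ is $1$-Lipschitz and, crucially, that normality of $K$ forces every vertex in one orbit to have a neighbour in any adjacent orbit: if $v\sim u$ with $v\in O$, $u\in O'$ and $k\in K$, then $kv\sim ku$. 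Lifting paths from a chosen root vertex block by block, one sees that $\pi$ restricts to a surjection $B_n(\Gamma)\twoheadrightarrow B_n(\Gamma/K)$, giving $s_n(\Gamma)\geq s_n(\Gamma/K)$. The extra factor of $g(d)$ in the stated denominator absorbs a companion bound $m\leq g(d)$ on the fibre size, which follows from Trofimov's argument once one notes that $K$ acts on each orbit through the finite group of automorphisms of the torsion-free nilpotent part of $H/K$, which embeds in $\GL_{h(H/K)}(\Z)$.

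The main obstacle is unpacking Trofimov's theorem into this quantitative form. The Hirsch-length bound $h(H/K)\leq d$ is immediate from the growth degree of $H/K$ being $d$. The fibre-size bound $m\leq g(d)$, which accounts for the extra factor of $g(d)$ compared with \cref{thm:v.nilp.min.growth}, is more delicate and requires tracking the effective content of Trofimov's theorem, specifically identifying $K$ with a finite group of automorphisms of the virtually nilpotent quotient's nilpotent part. Once these two inputs are in place, the argument combining \cref{thm:v.nilp.min.growth} with the surjectivity of $\pi$ on balls is mechanical.
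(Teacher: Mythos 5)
There is a genuine gap. Your proposal rests on the assumption that the quotient graph $\Gamma/K$ is a Cayley graph of the discrete group $H/K$, but Trofimov's theorem (even in Woess's topological formulation) does not deliver that: it only gives that the virtually nilpotent group acts on the quotient graph \emph{with finite vertex stabilisers}, which can be nontrivial. If the action were free, your chain of inequalities $s_n(\Gamma)\ge s_n(\Gamma/K)\ge n^d/\bigl(2^{d(d+2)}g(d)^d\bigr)$ would in fact yield a \emph{stronger} bound than the corollary asserts, and the ``extra'' factor of $g(d)$ would serve no purpose. That is a sign the argument is not self-consistent: you introduce a bound $m\le g(d)$ on the fibre size, but having already passed to the quotient, the fibre size never re-enters. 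The factor of $g(d)$ in the statement is there to absorb the \emph{vertex-stabiliser order} in the discrete quotient, not the orbit size, and that is the delicate part your sketch skips.

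The paper's route to that bound is different from what you sketch: after reducing by Trofimov to the case that a closed transitive group $G\le\Aut(\Gamma)$ is itself virtually nilpotent acting with finite stabilisers, it applies \cref{prop:index.bound} to obtain finite normal $H\le N\trianglelefteq G$ with $N/H$ torsion-free nilpotent and $[G:N]\le g(d)$. Passing to $\Gamma/H$, it uses \cref{lem:growth.stabs} to convert the group-growth bound of \cref{thm:v.nilp.min.growth} into a graph-growth bound, at the cost of dividing by the stabiliser order $|(G_{\Gamma/H})_{H(x)}|$. The key observation bounding this order is that $G_x\cap N\subseteq H$ (since $N/H$ is torsion-free while $G_x$ is finite), whence $G_x/(G_x\cap H)$ is a quotient of a subgroup of $G/N$, so $|(G_{\Gamma/H})_{H(x)}|\le[G:N]\le g(d)$. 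Your alternative claim that ``$K$ acts on each orbit through the finite group of automorphisms of the torsion-free nilpotent part of $H/K$'' is not the same mechanism, and as stated it does not plainly produce the needed bound. To repair the proposal you would need to replace the Cayley-graph assumption with a stabiliser-aware comparison like \cref{lem:growth.stabs} and supply the torsion-freeness argument that controls the stabiliser size.
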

See also \cref{cor:VT.min.almostEffective} for a partially effective version of \cref{cor:VT.min.growth} valid in a vertex-transitive graph of growth degree \emph{at least} $d$.

\subsection*{Minimal polynomial-growth constants and probability} Results such as \cref{cor:min.growth} can be used to give universal bounds on various quantities in probability. For example, given a vertex-transitive graph $\Gamma$ with vertex set $V$, edge set $E$, and valency $\Delta$,
define \emph{lazy simple random walk} on $\Gamma$ to be the Markov chain whose transition probabilities from $y \in V$ to $z \in V$ are
\[
p(y, z) = \begin{cases} 1/(2\Delta) &\text{if $\{y, z\} \in E$,}\\
                 1/2 &\text{if $y = z$,}\\
                 0 &\text{otherwise.}\end{cases}
\]
Write $p_t(y, z)$ for the $t$-step transition probabilities. A special case of \cite[Corollary 6.6]{LOG} states that if $c, d > 0$ are such that $s_n(\Gamma) \ge c n^d$ for all $n \in \N$, then for all $y, z \in V$ and $t\in\N$, we have
\[
p_t(y, z) \le p_t(y, y) \le \frac{8 d^{(d+5)/2} \Delta^{d/2}}{c \ue^{d/2}}t^{-d/2}.
\]

Combining this with our results yields several corollaries, such as the following.
\begin{corollary}\label{cor:return.prob} Let $d\in\N$, and suppose that $\Gamma$ is a Cayley graph of a group having growth degree at least $d$ or is a vertex-transitive graph with polynomial growth of degree exactly $d$. Then for every $y, z \in G$ and $t \ge 1$ we have
\[
p_t(y, z) \le p_t(y, y) \le \frac{8 d^{(d+5)/2} \Delta^{d/2}}{\eps_d\ue^{d/2}}t^{-d/2},
\]
where $\Delta$ is the valency
and $\eps_d>0$ is the constant given by \eqref{eq:epsd.def} in the case of a Cayley graph and is
$1/\bigl(2^{d(d+2)}g(d)^{d+1}\bigr)$
otherwise.
\end{corollary}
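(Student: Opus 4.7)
The plan is a direct chain: combine the explicit minimal-growth lower bounds proven earlier in the paper with the cited on-diagonal heat-kernel bound of \cite[Corollary 6.6]{LOG} quoted immediately before the statement. That bound says that once we know $s_n(\Gamma)\ge cn^d$ \emph{for every} $n\in\N$, we automatically get $p_t(y,y)\le 8d^{(d+5)/2}\Delta^{d/2}(c\ue^{d/2})^{-1}t^{-d/2}$, and of course $p_t(y,z)\le p_t(y,y)$ by Cauchy--Schwarz (equivalently, by reversibility of lazy simple random walk combined with the semigroup property). So the whole task is to produce such a constant $c$, valid for all $n$, with the right numerical value.

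First I treat the Cayley graph case. By hypothesis, $\deg(G)\ge d$, so \cref{cor:min.growth} applies and yields $s_n(\Gamma)\ge\eps_d n^d$ for every $n\in\N$, where $\eps_d$ may be taken as in \eqref{eq:epsd.def} by virtue of \cref{cor:effective.min.growth}. In the vertex-transitive case, the hypothesis is that $\Gamma$ has polynomial growth of degree exactly $d$, so \cref{cor:VT.min.growth} gives $s_n(\Gamma)\ge n^d/\bigl(2^{d(d+2)}g(d)^{d+1}\bigr)$ for every $n\in\N$. In each situation this is exactly a lower bound of the form $s_n(\Gamma)\ge cn^d$ with $c=\eps_d$ as defined in the statement of the corollary. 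Plugging this $c$ into the quoted inequality from \cite[Corollary 6.6]{LOG} produces the advertised upper bound on $p_t(y,y)$, and hence on $p_t(y,z)$, completing the argument.

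There is essentially no obstacle here: all substantive work has been done upstream (effectivising Shalom--Tao to get \cref{cor:effective.min.growth} in the Cayley case, and extending via Trofimov's theorem to get \cref{cor:VT.min.growth} in the vertex-transitive case), and the random-walk estimate is imported as a black box. The only thing to double-check is that the hypothesis of \cite[Corollary 6.6]{LOG} is the uniform lower bound for \emph{all} $n\in\N$ (not merely for large $n$), which is precisely what \cref{cor:min.growth,cor:VT.min.growth} provide.
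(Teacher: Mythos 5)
Your proposal is correct and is exactly the argument the paper intends (and implicitly carries out by saying ``Combining this with our results yields several corollaries''): invoke \cite[Corollary 6.6]{LOG} with the uniform lower bound $s_n(\Gamma)\ge c n^d$ supplied by \cref{cor:min.growth} together with \cref{cor:effective.min.growth} in the Cayley case, and by \cref{cor:VT.min.growth} in the vertex-transitive case. Nothing is missing.
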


\cref{cor:return.prob} in turn leads to bounds on various other quantities. For example, Panagiotis and Severo \cite{pan-sev} recently showed that there exists a \emph{gap} at $1$ for the critical probability $\pc$ for Bernoulli site (and hence bond \cite[Proposition 7.10]{ly-per}) percolation on a Cayley graph, in the sense that there exists $\eps>0$ such that either $\pc\le 1-\eps$ or $\pc=1$ for every Cayley graph. Using \cref{cor:return.prob} in their argument instead of their bound \cite[(3.4)]{pan-sev} on $p_t(x, y)$ allows one to give an explicit value for $\eps$, as follows.
\begin{corollary}\label{cor:gap}
Let $\Gamma$ be a Cayley graph. Then the critical probability $\pc(\Gamma)$ for Bernoulli site percolation on $\Gamma$ satisfies either
\[
\pc(\Gamma)<1-\exp\bigl\{-\exp\bigl\{17 \exp\{100 \cdot 8^{100}\}\bigr\}\bigr\} =: p_0
\]
or $\pc(\Gamma)=1$. Furthermore, the probability that the identity element belongs to an infinite cluster at value $p_0$ satisfies 
\[
\mathbb{P}_{p_0}[o \leftrightarrow \infty]
>
\exp\bigl\{-9 \exp\{100 \cdot 8^{100}\}\bigr\}.
\]
The same inequalities hold for Bernoulli bond percolation. 
\end{corollary}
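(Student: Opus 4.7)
The plan is to follow the proof of Panagiotis and Severo \cite{pan-sev} essentially verbatim, substituting our effective heat-kernel bound \cref{cor:return.prob} in place of their estimate \cite[(3.4)]{pan-sev} at the one point where this bound is invoked. Their argument derives the existence of a gap at $\pc=1$ (and of a lower bound on $\Prob_{p_0}[o\leftrightarrow\infty]$) by combining general percolation inequalities with a quantitative upper bound on the return probabilities of lazy simple random walk on an arbitrary Cayley graph. Because every other ingredient in their proof produces explicit constants, tracking through the substitution automatically yields explicit numerical values for $p_0$ and for $\Prob_{p_0}[o\leftrightarrow\infty]$.

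More concretely, I would first fix the integer growth-degree parameter $d$ that enters the PS estimate. The shape of the target expression $\exp\{-\exp\{17\exp\{100\cdot 8^{100}\}\}\}$ strongly suggests that one should take $d=8$ together with the Shalom--Tao constant $C=100$: the innermost factor $100\cdot 8^{100}$ is then exactly $Cd^{C}$ as it appears in the second term of \eqref{eq:epsd.def}. With this choice and the pointwise bound
\[
p_t(y,z)\le p_t(y,y)\le \frac{8\,d^{(d+5)/2}\Delta^{d/2}}{\eps_d\,\ue^{d/2}}\,t^{-d/2}
\]
from \cref{cor:return.prob}, one recovers an estimate on $p_t$ of the same form as \cite[(3.4)]{pan-sev} but with an explicit constant, controlled up to modest blow-up by $1/\eps_d$, which is dominated by the second term of \eqref{eq:epsd.def} and hence by $\exp\{d\exp(Cd^{C})\}=\exp\{8\exp(100\cdot 8^{100})\}$.

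Next I would insert this explicit upper bound into the PS machinery wherever \cite[(3.4)]{pan-sev} is used, and read off the resulting explicit lower bound on $1-\pc(\Gamma)$ and on $\Prob_{p_0}[o\leftrightarrow\infty]$. The extra factor $17$ (respectively $9$) in the final exponent comes from the power of $\eps_d$ and of other explicit quantities that PS's argument multiplies together; since every step of their proof already produces constants of a tame form, no further ineffectiveness is introduced. Finally, the extension to Bernoulli bond percolation is immediate from the comparison in \cite[Proposition 7.10]{ly-per}, as noted in the statement.

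The main obstacle is entirely bookkeeping: one must verify that each intermediate constant in the PS proof can be made explicit in a way that keeps the final expression iterated-exponential in $C d^{C}$ rather than worse, and confirm that $d=8$ is (close to) the optimal integer to plug in, in the sense that both branches of the minimum in \eqref{eq:epsd.def} give a comparable or weaker expression. There is no conceptual novelty beyond the substitution; the content of the corollary is that the explicit form of $\eps_d$ provided by \cref{cor:effective.min.growth} is tame enough to produce a genuine numerical bound once propagated through \cite{pan-sev}.
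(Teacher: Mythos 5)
Your proposal captures the paper's approach at the correct level of abstraction: the argument really is to go through \cite{pan-sev} line by line, replacing their ineffective heat-kernel bound (3.4) by the explicit bound of \cref{cor:return.prob} (with $k=2r+2$, so $d(3)=8$, and with $C=100$ in \cref{thm:st}), and then to track every intermediate constant so that the final gap is a concrete iterated exponential in $Cd^{C}=100\cdot8^{100}$. The paper does essentially exactly this for the ``high-dimensional'' branch of the Panagiotis--Severo argument, bounding the Gaussian-free-field quantities (the sums $s_n$, the constants $C_1(r)$, $d(r)$, $C_0$, $M$, etc.) explicitly; your guess that $d=8$, $C=100$ is the right input, and your explanation of where the prefactors $17$ and $9$ come from, are correct in spirit.

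What your sketch omits is that a significant part of the paper's proof is \emph{not} pure bookkeeping. The Panagiotis--Severo proof of their Theorem 1.1 splits into two regimes: when the ``dimension'' is at least $d_0=8$ they use the GFF/return-probability argument, but when the dimension is smaller they instead invoke a comparison to $\Z^2$ percolation via \cite[Theorem 3.20]{hutch-toint}, which rests on a rough-embedding argument whose constant is not explicit in that reference. The paper therefore has to open up that proof and establish an explicit bound $U=2(8n-3)^{3n-2}$ on the rough-embedding constant (the first several paragraphs of the appendix, culminating in \eqref{eq:nilp.perc}), using \cref{prop:index.bound} and the bound \eqref{eq:g(d)} on $g(k)$ to control the index $n$ by $16!$. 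Without this piece one gets an explicit gap only for Cayley graphs whose growth degree is at least 8; for the low-degree groups the bound would still be ineffective. So your plan is the right one, but ``substitute \cref{cor:return.prob} and propagate constants'' does not cover the whole proof: you also need to make the $\Z^2$-comparison in \cite{hutch-toint} effective, which is a genuinely separate computation.
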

See \Cref{appendix} for more details. The Cayley graph with the largest value known of $\pc$ less than 1 is apparently that of the presentation $\langle a, b \mid  a^3, b^2, (ab)^6 \rangle$, which is the 3-12 lattice; there, we have $\pc = \sqrt{1 - 2\sin(\pi/18)} = 0.8079^+$ for site percolation (\cite[p.~278]{sudziff} gives a simple reduction to site percolation on the Kagom\'e lattice, which is the line graph of the hexagonal lattice, whence site percolation on the former is equivalent to bond percolation on the latter, whose critical probability was rigorously determined by \cite{wierman-h}).

The third and fourth authors \cite{tt.resist} have shown that there is a gap at $0$ for escape probabilities of random walks on vertex-transitive graphs, in the sense that there exists an absolute constant $c>0$ such that simple random walk on an arbitrary vertex-transitive graph is either recurrent or has escape probability at least $c$. This constant $c$ is independent of the valency but is not explicit. The results of the present paper allow us, in the special case of Cayley graphs, to replace this non-explicit constant $c$ with an explicit function of the valency $\Delta$. The most immediate such bound follows from noting that the escape probability is equal to $1/\sum_{t \ge 0} p_t(x, x)$, and that a transient Cayley graph has growth degree at least $3$; Corollary \ref{cor:return.prob} then immediately yields a lower bound on the escape probability of the form $K\Delta^{-3/2}$ for a transient Cayley graph, where $K$ is an explicitly computable absolute constant. We can do even better, however, if we pass via an isoperimetric inequality. By an \emph{isoperimetric inequality} in a group $G$ with finite generating set $X$, we mean a lower bound on the size of the \emph{vertex boundary} $\partial A$ of a finite set $A\subseteq G$, defined as $\partial A=A(X\cup X^{-1})\setminus A$. It follows from a well-known result of Coulhon and Saloff-Coste \cite{csc}, with bounds as given by \cite[Theorem 6.29]{ly-per}, that for each positive integer $d$, we have
\[
|\partial A|\ge\frac{|A|}{2\lceil(2|A|/\eps_d)^{1/d}\rceil}
\]
for an arbitrary, non-empty, finite subset $A$ of a group $G$ satisfying $\deg(G)\ge d$, where $\eps_d$ is the quantity appearing in \cref{thm:tt,cor:min.growth}. In particular, this implies the explicit \emph{$d$-dimensional isoperimetric inequality}
\begin{equation}\label{eq:iso}
|\partial A|\ge \frac{\eps_d^{1/d}}8|A|^\frac{d-1}d
\end{equation}
for any such $A$ and $G$.  Inserting \eqref{eq:iso} into the argument of \cite{tt.resist}, one can improve the lower bound $K\Delta^{-3/2}$ on the escape probability described above to $J\Delta^{-2/3}$, where $J$ is an explicitly computable absolute constant. Indeed, this leads to a lower bound on the escape probability of the form $J_d\Delta^{-2/d}$ for any group $G$ satisfying $\deg(G)\ge d$, where $J_d$ is an explicit function of $d$.

For one final example of an application of our results, \cite[Proposition 2.8]{LPS} shows that for every transitive graph, $\E[K_i] \le \sum_{t=0}^\infty (t+1) p_t(x, x)/2$, where $i \ge 0$ and $K_i$ is the number of times $t$ such that the loop-erasure of the (nonlazy) simple random walk path up to time $t$ has exactly $i$ edges (although $p_t(x, x)$ still refers here to the lazy simple random walk). In the case of a Cayley graph of growth degree at least $5$, it follows from \Cref{cor:return.prob} that $\E[K_i] \le 5131 \Delta^{5/2}/\eps_5$. 
An interesting question is whether the dependence on the valency is necessary for bounding $\E[K_i]$.

\section{Background on nilpotent groups}
In this section we present some standard definitions and results from the theory of nilpotent groups. Recall that the set of elements of finite order in a nilpotent group $G$ is a subgroup $T$, called the \emph{torsion subgroup}. If $G$ is generated by a finite set $X$, then $T$ is finite, and the quotient $G/T$ is torsion-free \cite[5.2.7]{robinson} with $s_n(G, X) \ge s_n(G/T, XT)$. In this case, the growth of $G$ is trivially of the same degree as the growth of $G/T$, meaning that in many of our arguments we may assume without loss of generality that any nilpotent groups are torsion-free.

Given elements $g$ and $h$ of a group $G$, we denote by $[g,h]$ the \emph{commutator} $g^{-1}h^{-1}gh$ of $g$ and $h$. More generally, given elements $x_1,\ldots,x_k$ of a group $G$, we define the \emph{simple commutator} $[x_1,\ldots,x_k]$ of \emph{weight} $k$ recursively by $[x_1]=x_1$ and $[x_1,\ldots,x_k]=[[x_1,\ldots,x_{k-1}],x_k]$. By definition, $\gamma_k(G)$ is the subgroup of $G$ generated by the simple commutators of weight $k$ in elements of $G$.

Write $\lambda(k)$ for the \emph{length} of the simple commutator of weight $k$ as an unreduced word in the elements $x_i^{\pm1}$; thus, for example, $\lambda(3)=10$ because $[x_1,x_2,x_3]=x_2^{-1}x_1^{-1}x_2x_1x_3^{-1}x_1^{-1}x_2^{-1}x_1x_2x_3$. It is clear that $\lambda(k+1) = 2\lambda(k) + 2$, whence $\lambda(k) = 3\cdot 2^{k-1} - 2$. We will use only the following consequence:
\begin{equation}\label{eq:l(k)}
\lambda(k)\le2^{k-1}k.
\end{equation}

\begin{lemma}[{\cite[Theorem 10.2.3]{hall} or \cite[Proposition 5.2.6]{tointon.book}}]\label{lem:G_s=<comms>}
Let $G$ be a group with generating set $X$ and let $k\in\N$. Then $\gamma_k(G)/\gamma_{k+1}(G)$ is generated by the image in $G/\gamma_{k+1}(G)$ of the set $\{[x_1,\ldots,x_k]:x_1,\ldots,x_k\in X\}$.
\end{lemma}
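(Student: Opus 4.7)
The plan is to argue by induction on $k$. The case $k=1$ is trivial, since $\gamma_1(G)=G$ is generated by $X$, and the weight-$1$ simple commutators are precisely the elements of $X$. For the inductive step, I would first establish that the commutator bracket descends to a bi-additive (i.e., $\Z$-bilinear) map
\[
[\,\cdot\,,\,\cdot\,]\colon \gamma_{k-1}(G)/\gamma_k(G) \times G/\gamma_2(G) \longrightarrow \gamma_k(G)/\gamma_{k+1}(G),
\]
and then combine this with the inductive hypothesis on $\gamma_{k-1}(G)/\gamma_k(G)$ and the fact that $G/\gamma_2(G)$ is generated by $X$.

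To set up the bi-additivity, I would use the standard identities
\[
[ab,c]=[a,c]^{b}\,[b,c],\qquad [a,bc]=[a,c]\,[a,b]^{c},
\]
together with the inclusion $[\gamma_k(G),G]\subseteq\gamma_{k+1}(G)$. When $a\in\gamma_{k-1}(G)$ and $b,c\in G$, each commutator $[a,b]$ and $[a,c]$ lies in $\gamma_k(G)$, so the conjugation corrections $[[a,c],b]$ and $[[a,b],c]$ lie in $\gamma_{k+1}(G)$. This gives
\[
[ab,c]\equiv[a,c][b,c],\qquad [a,bc]\equiv[a,b][a,c]\pmod{\gamma_{k+1}(G)}
\]
for $a,b\in\gamma_{k-1}(G)$, and similarly $[a^{-1},c]\equiv[a,c]^{-1}$ and $[a,c^{-1}]\equiv[a,c]^{-1}$ modulo $\gamma_{k+1}(G)$. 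Moreover, if $a'\in\gamma_k(G)$ then $[a',c]\in\gamma_{k+1}(G)$, and if $c'\in\gamma_2(G)$ then $[a,c']\in\gamma_{k+1}(G)$, which shows that the map above is well-defined on the quotients.

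Now, $\gamma_k(G)=[\gamma_{k-1}(G),G]$ is generated, as a subgroup, by commutators $[g,y]$ with $g\in\gamma_{k-1}(G)$ and $y\in G$. Since $\gamma_k(G)/\gamma_{k+1}(G)$ is abelian, it is generated as an abelian group by the images of such $[g,y]$. By the inductive hypothesis, $g$ is congruent modulo $\gamma_k(G)$ to a product of simple commutators $[x_1,\ldots,x_{k-1}]^{\pm 1}$ with $x_i\in X$, and $y$ is a word in $X\cup X^{-1}$. Applying bi-additivity in both arguments and using the sign rules for inverses, $[g,y]$ is, modulo $\gamma_{k+1}(G)$, a product of elements of the form $[[x_1,\ldots,x_{k-1}],x_k]^{\pm 1}=[x_1,\ldots,x_k]^{\pm 1}$ with $x_1,\ldots,x_k\in X$, which is exactly the required conclusion.

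The main thing to get right — more bookkeeping than genuine difficulty — is to verify that every correction term produced by the commutator identities genuinely lies in $\gamma_{k+1}(G)$, and not merely in some higher-index subgroup that would spoil a clean inductive statement. This amounts to carefully tracking weights via $[\gamma_i(G),\gamma_j(G)]\subseteq\gamma_{i+j}(G)$ and invoking $\gamma_{i+j}(G)\subseteq\gamma_{k+1}(G)$ whenever $i+j\ge k+1$.
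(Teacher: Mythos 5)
Your proof is correct and follows the standard inductive argument, relying on the multilinearity of the simple-commutator map modulo the next term of the lower central series. The paper does not prove this lemma itself but cites Hall (Theorem 10.2.3) and Tointon (Proposition 5.2.6), both of whom use essentially this approach, so your argument matches theirs.
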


\begin{lemma}[{\cite[Lemma 5.5.3 \& Proposition 5.2.7]{tointon.book}}]\label{lem:comms.bilin}
Let $G$ be a group, let $g\in G$, and let $k\in\N$. Then the map
\[
\begin{array}{ccc}
\gamma_k(G)&\to&\gamma_{k+1}(G)/\gamma_{k+2}(G)\\[3pt]
x&\mapsto&[x,g]\gamma_{k+2}(G)
\end{array}
\]
is a homomorphism, the kernel of which contains $\gamma_{k+1}(G)$.
\end{lemma}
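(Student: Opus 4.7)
The plan is to verify the homomorphism property and the kernel containment via the standard commutator expansion identity, recorded with the convention $[a,b]=a^{-1}b^{-1}ab$.

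First I would check that the map is well defined: if $x\in\gamma_k(G)$ and $g\in G$, then $[x,g]\in[\gamma_k(G),G]=\gamma_{k+1}(G)$, so $[x,g]\gamma_{k+2}(G)$ is indeed a coset in the target. Next, for arbitrary $x,y\in G$ I would establish the well-known identity
\[
[xy,g]\;=\;[x,g]^{y}\,[y,g],
\]
which is a direct expansion: $(xy)^{-1}g^{-1}(xy)g=y^{-1}x^{-1}g^{-1}xg\cdot g^{-1}y^{-1}gy\cdot y^{-1}$ rearranges to $y^{-1}[x,g]y\cdot[y,g]$.

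The central step is to show that, when $x,y\in\gamma_k(G)$, the conjugation twist in $[x,g]^y$ is absorbed into $\gamma_{k+2}(G)$. I would write $[x,g]^y=[x,g]\cdot\bigl([x,g]^{-1}y^{-1}[x,g]y\bigr)=[x,g]\cdot[[x,g],y]$ and observe that $[x,g]\in\gamma_{k+1}(G)$ and $y\in G$, so $[[x,g],y]\in[\gamma_{k+1}(G),G]=\gamma_{k+2}(G)$. Substituting into the identity above gives
\[
[xy,g]\;\equiv\;[x,g]\,[y,g]\pmod{\gamma_{k+2}(G)},
\]
which is exactly the statement that the map is a homomorphism into $\gamma_{k+1}(G)/\gamma_{k+2}(G)$.

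Finally, for the kernel containment: if $x\in\gamma_{k+1}(G)$, then $[x,g]\in[\gamma_{k+1}(G),G]=\gamma_{k+2}(G)$, so $[x,g]\gamma_{k+2}(G)$ is the identity coset. I do not foresee any real obstacle here beyond being careful with the commutator convention; the only subtle point is recognising that the conjugation ``error'' $[[x,g],y]$ drops down one more level in the lower central series because the first entry is already in $\gamma_{k+1}(G)$, which is what makes the map additive modulo $\gamma_{k+2}(G)$ rather than merely in $\gamma_{k+1}(G)$.
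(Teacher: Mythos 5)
Your proof is correct. The paper does not include its own argument for this lemma but simply cites Tointon's book, and your argument is precisely the standard one that appears there (and in any textbook treatment): expand $[xy,g]=[x,g]^{y}[y,g]$, observe that the conjugation error $[[x,g],y]$ lies in $[\gamma_{k+1}(G),G]=\gamma_{k+2}(G)$ because $[x,g]$ is already one level deeper in the lower central series, and handle the kernel by the same one-line containment $[\gamma_{k+1}(G),G]\subseteq\gamma_{k+2}(G)$.
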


\begin{lemma}\label{lem:ab>2} Let $c\in\N$, and let $G$ be a torsion-free nilpotent group of class $c$. For each $i=1,\ldots,c$, write $r(i)$ for the torsion-free rank of $\gamma_i(G)/\gamma_{i+1}(G)$. Then $r(i) \ge 1$ for $1 \le i \le c$, and if $G$ is not cyclic, then $r(1) \ge 2$.
\end{lemma}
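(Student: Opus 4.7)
The plan is to split the lemma into two independent claims. For the first (that $r(i)\ge1$ for $1\le i\le c$), I would first show that the lower central series strictly descends through $\gamma_c(G)$: if $\gamma_i(G)=\gamma_{i+1}(G)$ for some $i\le c$, then iterating the identity $\gamma_{j+1}(G)=[\gamma_j(G),G]$ propagates the equality upwards to give $\gamma_i(G)=\gamma_{c+1}(G)=\{1\}$, contradicting $\gamma_c(G)\ne\{1\}$. Combining this with the classical fact that the successive quotients of the lower central series of a torsion-free nilpotent group are themselves torsion-free abelian, each such quotient is non-trivial and torsion-free, hence has rank $r(i)\ge1$.

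For the second claim, I would prove the contrapositive: assuming $r(1)=1$, show that $G$ is cyclic. Since $G$ is (implicitly) finitely generated, $G/\gamma_2(G)$ is finitely generated, torsion-free abelian of rank $1$, and hence isomorphic to $\Z$, so one can choose $a\in G$ whose image generates $G/\gamma_2(G)$. The key step is then to apply \cref{lem:G_s=<comms>} with the (large) generating set $X=\{a\}\cup\gamma_2(G)$ in the case $k=2$: this tells us that $\gamma_2(G)/\gamma_3(G)$ is generated by the images of the commutators $[x_1,x_2]$ with $x_1,x_2\in X$. A quick case check shows each such commutator lies in $\gamma_3(G)$: $[a,a]=1$, any $[a,g]$ or $[g,a]$ with $g\in\gamma_2(G)$ lies in $[G,\gamma_2(G)]=\gamma_3(G)$, and any commutator of two elements of $\gamma_2(G)$ lies in $[\gamma_2(G),\gamma_2(G)]\subseteq\gamma_4(G)\subseteq\gamma_3(G)$. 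Hence $\gamma_2(G)=\gamma_3(G)$, and the first paragraph's propagation argument then forces $\gamma_2(G)=\{1\}$, so $G=\langle a\rangle$ is cyclic.

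The only step requiring any real ingenuity is the choice of the generating set $X=\{a\}\cup\gamma_2(G)$ in the second part; once this is made, \cref{lem:G_s=<comms>} reduces the claim to a one-line degree count on commutators. The remaining points to handle with care are the invocation of the torsion-freeness of the quotients $\gamma_i(G)/\gamma_{i+1}(G)$, which is a standard but non-trivial fact about torsion-free nilpotent groups, and the implicit use of finite generation to identify $G/\gamma_2(G)$ with $\Z$ rather than a more general rank-$1$ torsion-free abelian group.
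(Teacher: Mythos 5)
Your argument hinges on the claim that the lower-central quotients $\gamma_i(G)/\gamma_{i+1}(G)$ of a torsion-free nilpotent group are torsion-free. This is false. Let $H$ be the integral Heisenberg group with generators $a,b$ and centre generated by $c=[a,b]$, and let $G=\langle a,b^2,c\rangle\le H$; then $\gamma_2(G)=\langle c^2\rangle$ while $c\in G$, so $G/\gamma_2(G)\cong\Z^2\oplus\Z/2\Z$ has torsion even though $G$ is torsion-free. (The correct classical statement concerns the upper central series, or the isolator of the lower central series, not the lower central series itself.) Once this fails, both halves of your proof break: strict descent of the lower central series only shows that each $\gamma_i/\gamma_{i+1}$ is nontrivial, and a nontrivial finitely generated abelian group can have torsion-free rank zero; and in the second half, if $r(1)=1$ then $G/\gamma_2(G)$ could a priori be $\Z\oplus T$ with $T$ finite and nontrivial, in which case $\{a\}\cup\gamma_2(G)$ need not generate $G$, and commutators involving preimages of elements of $T$ need not lie in $\gamma_3(G)$.

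The paper closes exactly this hole using \cref{lem:comms.bilin}. For the first claim it uses bilinearity of commutators modulo the next term to show that $r(k)=0$ forces $r(k+1)=0$, so a maximal $k$ with $r(k)=0$ would have to be $c$; but $\gamma_c(G)$ is an honest subgroup of $G$, hence torsion-free and nontrivial, so $r(c)\ge1$, a contradiction. For the second claim it chooses a finite generating set with exactly one element of infinite order modulo $\gamma_2(G)$ (using the cited fact that a set generates $G$ iff its image generates $G/\gamma_2(G)$), and applies bilinearity again to conclude that every generator of $\gamma_2/\gamma_3$ has finite order, whence $r(2)=0$ and $c=1$ by the first claim. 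Your plan is in the right spirit and can be repaired along these lines, but the possible torsion in the quotients must be handled via bilinearity rather than assumed away.
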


\begin{proof}Suppose that $r(k) = 0$ for some $k\in\{1,\ldots,c\}$, and let $k$ be the maximum such. If $k = c$, then $\gamma_c(G)$ is finite, hence trivial, contrary to the definition of $c$. If $k<c$, then all simple commutators of weight $k$ have finite order modulo $\gamma_{k+1}$. \cref{lem:comms.bilin} therefore implies that all simple commutators of weight $k+1$ have finite order modulo $\gamma_{k+2}$. This implies that $r(k+1)=0$, contradicting the maximality of $k$. This establishes our claim that $r(i) \ge 1$ for $1 \le i \le c$.

Now suppose that $r(1) = 1$. Then we can choose a generating set $X$ for $G$ such that only one of the $x_i$ has infinite order modulo $\gamma_2(G)$ (indeed, $X$ generates $G$ if and only if the image of $X$ in $G/\gamma_2(G)$ generates $G/\gamma_2(G)$ \cite[Corollary 10.3.3]{hall}). \cref{lem:comms.bilin} therefore implies that every commutator $[x,y]$ with $x,y\in G$ has finite order in $\gamma_2(G)/\gamma_3(G)$, so that $r(2)=0$. By the first part of the lemma, this implies that $c=1$, so that $G$ is free abelian of rank $1$, i.e., infinite cyclic.
\end{proof}

\begin{corollary}\label{lem:class.vs.deg}
Let $d\ge2$ be an integer and suppose $G$ is a torsion-free nilpotent group with growth degree $d$. Then $c=\cl(G)$ satisfies $c(c+1)\le 2d-2$.
\end{corollary}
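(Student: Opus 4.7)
The plan is to apply the Bass--Guivarc'h formula $d = \sum_{i=1}^c i\,r(i)$, recorded in the introduction, and combine it with the rank lower bounds from \cref{lem:ab>2}. Since the claim is essentially an identity once one knows that every $r(i)$ is positive and $r(1)$ is at least $2$, there is no real obstacle; the only thing to check carefully is that the hypothesis $d\ge 2$ forces $G$ to be non-cyclic so that the stronger bound $r(1)\ge 2$ applies.

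First I would observe that $G$ cannot be infinite cyclic: an infinite cyclic group has growth degree $1$, contradicting $d\ge 2$. Hence \cref{lem:ab>2} yields $r(1)\ge 2$ together with $r(i)\ge 1$ for $2\le i\le c$.

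Plugging these into the Bass--Guivarc'h formula gives
\[
d \;=\; \sum_{i=1}^{c} i\,r(i) \;\ge\; 2 + \sum_{i=2}^{c} i \;=\; 1 + \sum_{i=1}^{c} i \;=\; 1 + \frac{c(c+1)}{2},
\]
and rearranging yields $c(c+1)\le 2d-2$, as required.
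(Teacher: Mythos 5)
Your proof is correct and takes essentially the same route as the paper: both invoke the Bass--Guivarc'h formula and \cref{lem:ab>2} to obtain $d \ge 1 + c(c+1)/2$. You spell out the step the paper leaves implicit — that $d\ge 2$ rules out the cyclic case, which is needed to apply the stronger bound $r(1)\ge 2$ — which is a welcome clarification, but it is the same argument.
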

\begin{proof}
\cref{lem:ab>2} implies that $d \ge 1 + \sum_{i=1}^c i = 1 + c(c + 1)/2$.
\end{proof}

\begin{lemma}[{\cite[Lemma 5.5.2]{tointon.book}}]\label{lem:comms.linear}
Let $G$ be a group and let $k\in\N$. Then the map
\[
\begin{array}{ccc}
G^k&\to&\gamma_k(G)\\
(x_1,\ldots,x_k)&\mapsto&[x_1,\ldots,x_k]
\end{array}
\]
is a homomorphism in each variable modulo $\gamma_{k+1}(G)$.
\end{lemma}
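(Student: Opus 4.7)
The plan is to induct on $k$. The base case $k=1$ is immediate since $[x_1]=x_1$, so the map is the identity on $G = \gamma_1(G)$, which is trivially a homomorphism. For the inductive step, assume the statement holds for $k-1$ and split into two cases according to whether the variable being varied is $x_k$ or some $x_i$ with $i<k$.

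For the last variable $x_k$: I would fix $y=[x_1,\ldots,x_{k-1}]\in\gamma_{k-1}(G)$ and apply the standard commutator identity $[y,ab]=[y,b]\,[y,a]^b$. Since $[y,a]\in\gamma_k(G)$, the difference $[y,a]^{-1}[y,a]^b=[[y,a],b]$ lies in $\gamma_{k+1}(G)$, so $[y,a]^b\equiv[y,a]\pmod{\gamma_{k+1}(G)}$. Using that $\gamma_k(G)/\gamma_{k+1}(G)$ is abelian, this yields $[y,ab]\equiv[y,a]\,[y,b]\pmod{\gamma_{k+1}(G)}$, as desired.

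For a variable $x_i$ with $i<k$: I would exploit the recursive definition $[x_1,\ldots,x_k]=[[x_1,\ldots,x_{k-1}],x_k]$. By \cref{lem:comms.bilin} (applied with $g=x_k$ and with $k-1$ in place of $k$), the map $w\mapsto[w,x_k]\gamma_{k+1}(G)$ is a homomorphism $\gamma_{k-1}(G)\to\gamma_k(G)/\gamma_{k+1}(G)$ whose kernel contains $\gamma_k(G)$, so it factors through a homomorphism $\bar\varphi\colon\gamma_{k-1}(G)/\gamma_k(G)\to\gamma_k(G)/\gamma_{k+1}(G)$. By the inductive hypothesis, varying $x_i$ with the other $x_j$ $(j\le k-1)$ fixed, the map $x_i\mapsto[x_1,\ldots,x_{k-1}]\gamma_k(G)$ is a homomorphism $G\to\gamma_{k-1}(G)/\gamma_k(G)$; composing with $\bar\varphi$ yields the required homomorphism.

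There is no real obstacle here: the argument is a bookkeeping exercise combining the commutator identity $[y,ab]=[y,b][y,a]^b$ with \cref{lem:comms.bilin} and induction. The only delicate point is being careful to interpret ``homomorphism modulo $\gamma_{k+1}(G)$'' as a statement about the composition of the given map with the projection $\gamma_k(G)\to\gamma_k(G)/\gamma_{k+1}(G)$, and to keep track of which quotient each intermediate map lands in.
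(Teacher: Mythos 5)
The paper cites this lemma from \cite[Lemma 5.5.2]{tointon.book} and does not reproduce a proof, so there is no in-paper argument to compare against; the question is only whether your proof is correct. It is. Your inductive scheme is sound: for the last variable you correctly use the identity $[y,ab]=[y,b][y,a]^b$, note that $[y,a]\in\gamma_k(G)$ forces $[[y,a],b]\in\gamma_{k+1}(G)$, and invoke the abelianness of $\gamma_k(G)/\gamma_{k+1}(G)$ to rearrange; for an earlier variable $x_i$ you correctly factor the map as the inductive homomorphism $x_i\mapsto[x_1,\ldots,x_{k-1}]\gamma_k(G)$ followed by the homomorphism $\bar\varphi\colon\gamma_{k-1}(G)/\gamma_k(G)\to\gamma_k(G)/\gamma_{k+1}(G)$ furnished by \cref{lem:comms.bilin} (applied with $k-1$ in place of $k$), and a composition of homomorphisms is a homomorphism. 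One small remark: in the cited reference \cref{lem:comms.bilin} is numbered \emph{after} the present lemma (5.5.3 versus 5.5.2), so the book's own exposition may order things differently; but this is not a logical worry, since \cref{lem:comms.bilin} follows directly from $[xy,g]=[x,g]^y[y,g]$ without reference to \cref{lem:comms.linear}, so there is no circularity. Indeed, in the interest of uniformity you could have handled the last-variable case by the exact same device, using the symmetric identity $[y,ab]=[y,b][y,a]^b$ to show that $a\mapsto[y,a]\gamma_{k+1}(G)$ is a homomorphism on $G$ whose composition with nothing extra is needed, but the direct computation you gave is equally clean.
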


\section{Minimal polynomial-growth constants for virtually nilpotent groups}

We start by considering the special case of a group that is actually nilpotent, rather than merely virtually nilpotent.

\begin{prop}\label{prop:nilp.min.growth}
Let $d\in\N$, and suppose $G$ is a nilpotent group with polynomial growth of degree $d$. Let $X$ be a finite generating set for $G$. Then
\[
s_n(G,X)\ge\frac{n^d}{2^{d^2}}
\]
for every $n\in\N$.
\end{prop}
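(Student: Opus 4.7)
My plan is to exhibit $M^d$ distinct elements in the ball $B_n$ by parameterising them as products of commutators that realise a base-$M$ expansion of Mal'cev-style coordinates. After reducing to the torsion-free case (which loses nothing since the torsion subgroup $T$ of $G$ is finite, $G/T$ is torsion-free nilpotent of the same degree, and $s_n(G,X)\ge s_n(G/T,XT)$), each $\gamma_i/\gamma_{i+1}$ is free abelian of some rank $r(i)$, and by \cref{lem:G_s=<comms>} I may pick simple commutators $u_{i,j}=[x_{i,j,1},\ldots,x_{i,j,i}]$ in $X\cup X^{-1}$ ($1\le j\le r(i)$) whose images form a $\Z$-basis of $\gamma_i/\gamma_{i+1}$. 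For a parameter $M\ge 1$ and integers $0\le k<i$, $0\le b<M$, the key building block is
\[
w_{i,j,k}(b):=[x_{i,j,1}^{b},x_{i,j,2}^{M},\ldots,x_{i,j,k+1}^{M},x_{i,j,k+2},\ldots,x_{i,j,i}],
\]
and for each tuple $\mathbf{b}=(b_{i,j,k})$ with $b_{i,j,k}\in\{0,\ldots,M-1\}$ I form
\[
g_{\mathbf{b}}:=\prod_{i=c}^{1}\prod_{j=1}^{r(i)}\prod_{k=0}^{i-1}w_{i,j,k}(b_{i,j,k})
\]
with the outermost product taken in decreasing order of $i$.

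The distinctness of the $g_{\mathbf{b}}$ follows from \cref{lem:comms.linear}: this gives $w_{i,j,k}(b)\equiv u_{i,j}^{bM^{k}}\pmod{\gamma_{i+1}}$, hence $\prod_{k}w_{i,j,k}(b_{i,j,k})\equiv u_{i,j}^{a_{i,j}}\pmod{\gamma_{i+1}}$ where $a_{i,j}:=\sum_{k}b_{i,j,k}M^{k}$, and descending through $\gamma_{2}\supset\gamma_{3}\supset\cdots\supset\gamma_{c+1}=\{1\}$ one recovers each $a_{i,j}$ (by the $\Z$-basis property of the $u_{i,j}$) and then each digit $b_{i,j,k}$ (by the uniqueness of base-$M$ expansions). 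So there are exactly $M^{d}$ elements $g_{\mathbf{b}}$. For their word length, the standard inductive formula $|[x_1^{n_1},\ldots,x_i^{n_i}]|_{X}=2^{i-1}n_{1}+2^{i-1}n_{2}+2^{i-2}n_{3}+\cdots+2\,n_{i}$, applied per-$(i,j)$ block and summed over $k=0,\ldots,i-1$ with each $b_{i,j,k}\le M-1$, yields the explicit bound $\sum_{k}|w_{i,j,k}(b_{i,j,k})|\le a(i)\,M$ with $a(i):=(3i-4)2^{i-1}+2\in\{1,6,22,66,\ldots\}$. Hence $|g_{\mathbf{b}}|\le CM$ with $C:=\sum_{i=1}^{c}r(i)\,a(i)$.

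The decisive estimate --- and in my view the main technical obstacle, since the Heisenberg case $(r(1),r(2))=(2,1)$, $d=4$ saturates it exactly and any looser estimate would give the wrong denominator --- is the sharp inequality $C\le 2^{d-1}$. I plan to prove this by induction on $d$: removing one unit from $r(c)$ (dropping the class by $1$ if $r(c)$ becomes zero, which is still valid by \cref{lem:ab>2}) gives a valid rank sequence $r'$ of degree $d-c$, so $C(r)\le C(r')+a(c)\le 2^{d-c-1}+a(c)$, and the inductive step reduces to $a(c)\le 2^{d-c-1}(2^{c}-1)$. The worst case is the minimum degree $d=1+c(c+1)/2$ permitted by \cref{lem:class.vs.deg}, where this becomes the purely numerical inequality
\[
(3c-4)2^{c-1}+2\le 2^{c(c-1)/2}(2^{c}-1),
\]
verifiable directly (equality at $c=1,2$; strict for $c\ge 3$, where the right-hand side grows double-exponentially). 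Once $C\le 2^{d-1}$ is established, taking $M=\lfloor n/C\rfloor\ge\lfloor n/2^{d-1}\rfloor$ gives $s_n(G,X)\ge M^{d}$, and since $\lfloor x\rfloor\ge x/2$ for $x\ge 1$, for $n\ge 2^{d-1}$ we have $\lfloor n/2^{d-1}\rfloor\ge n/2^{d}$, yielding $s_n(G,X)\ge n^{d}/2^{d^{2}}$; for $n<2^{d-1}$ we have $n^{d}/2^{d^{2}}<2^{-d}<1\le s_n(G,X)$, so the bound is trivial.
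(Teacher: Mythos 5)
Your proof is correct, but it takes a genuinely different route from the paper's. The paper proceeds by induction on $d$: it takes the cyclic central subgroup $H=\langle[x_1,\ldots,x_c]\rangle$, bounds $|B_{\lfloor n/2\rfloor}\cap H|$ from below by representing integers in $[1,L^c]$ as sums of $c$-fold products of digits in $[1,L]$ and applying \cref{lem:comms.linear}, applies the induction hypothesis to $G/H$ (of degree $d-c$), and combines via \cref{lem:lb.quotient.kernel}; the bound $n^d/2^{d^2}$ then comes out of the numerical inequality $(d-c)^2+d+c^2+2c\log_2 c\le d^2$. You instead build $M^d$ distinct elements in one shot via Mal'cev-style commutator coordinates in each $\gamma_i/\gamma_{i+1}$, using the commutator-of-powers trick (again from \cref{lem:comms.linear}) to encode a base-$M$ digit in each coordinate, and the heart of your argument is the sharp bound $\sum_i r(i)a(i)\le 2^{d-1}$, which you prove by a separate induction on rank sequences reducing to $(3c-4)2^{c-1}+2\le 2^{c(c-1)/2}(2^c-1)$; you correctly identify $\Z$ and the Heisenberg group as the extremal cases. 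Both approaches reach the same constant $2^{d^2}$. One small repair: the simple commutators given by \cref{lem:G_s=<comms>} generate $\gamma_i/\gamma_{i+1}\cong\Z^{r(i)}$, but a generating set of $\Z^r$ need not contain a $\Z$-basis (e.g.\ $\{2,3\}\subset\Z$); you should instead take a maximal $\Z$-linearly independent subset of size $r(i)$, which always exists and is all your distinctness argument actually uses (the map $(a_j)\mapsto\prod u_{i,j}^{a_j}$ is injective whenever the $u_{i,j}$ are independent). Your injectivity argument by descending the lower central series is otherwise sound, provided one notes — as you implicitly do — that once all digits at level $i$ are determined, the corresponding factors $w_{i,j,k}(b_{i,j,k})$ are identical as group elements (not merely congruent mod $\gamma_{i+1}$) and can therefore be cancelled exactly before passing to level $i+1$.
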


The proof of \cref{prop:nilp.min.growth} is by induction on $d$, and we carry out the induction step by examining a certain quotient of $G$ with lower growth degree. We will use the following technical lemma that allows us to compare the growth of $G$ to the growth of this quotient.
Recall that $B_n(G,X)$ denotes the ball of radius $n$ with respect to $X$ centered at the identity element in $G$.

\begin{lemma}\label{lem:lb.quotient.kernel}
Let $G$ be a group with finite generating set $X$, and suppose $H\trianglelefteq G$ is a normal subgroup. Then for every $m,n\ge0$, we have $s_{m+n}(G,X)\ge s_m(G/H,XH/H)\cdot|B_n(G,X)\cap H|$.
\end{lemma}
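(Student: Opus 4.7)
The plan is to exhibit an injective map from the Cartesian product $B_m(G/H, XH/H) \times \bigl(B_n(G, X) \cap H\bigr)$ into $B_{m+n}(G, X)$, from which the desired inequality will follow immediately.

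First I would choose coset representatives adapted to the word metric. For each element $\bar g \in B_m(G/H, XH/H)$, there is by definition a word $w_{\bar g}$ of length at most $m$ in $XH/H \cup (XH/H)^{-1}$ representing $\bar g$. Lifting this word letter by letter (using that each generator of $G/H$ is the image of a generator of $G$), we obtain a word $\tilde w_{\bar g}$ of the same length in $X \cup X^{-1}$; let $\tilde g \in G$ denote the element it represents. By construction $\tilde g \in B_m(G, X)$ and $\tilde g H = \bar g$, so we have fixed a section $\bar g \mapsto \tilde g$ of the projection $B_m(G,X) \to B_m(G/H, XH/H)$.

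Next I would define $\ph\colon B_m(G/H, XH/H) \times (B_n(G, X) \cap H) \to G$ by $\ph(\bar g, h) = \tilde g \cdot h$. Since $\tilde g$ is represented by a word of length $\le m$ and $h$ by a word of length $\le n$ in $X \cup X^{-1}$, concatenation gives a word of length $\le m + n$ representing $\tilde g h$, so $\ph$ lands in $B_{m+n}(G, X)$.

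Finally I would verify injectivity. Suppose $\tilde g_1 h_1 = \tilde g_2 h_2$ with $h_1, h_2 \in H$. Then $\tilde g_2^{-1} \tilde g_1 = h_2 h_1^{-1} \in H$, so $\tilde g_1 H = \tilde g_2 H$, i.e.\ $\bar g_1 = \bar g_2$; by the uniqueness of our chosen representatives $\tilde g_1 = \tilde g_2$, and then $h_1 = h_2$. Counting elements in the image yields $s_{m+n}(G, X) \ge s_m(G/H, XH/H) \cdot |B_n(G, X) \cap H|$.

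There is no real obstacle here; the only thing to be slightly careful about is that a word of length $\le m$ in the generators $XH/H$ of $G/H$ lifts to a word of the same length in $X$, which is automatic because $XH/H$ is the image of $X$ under the quotient map. Everything else is bookkeeping.
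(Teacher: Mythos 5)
Your proof is correct and follows essentially the same route as the paper's: choose a set of coset representatives inside $B_m(G,X)$ (one per element of $B_m(G/H,XH/H)$) and multiply each by the elements of $B_n(G,X)\cap H$ to get distinct elements of $B_{m+n}(G,X)$. You simply spell out the section and the injectivity check in more detail than the paper does.
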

\begin{proof}
The ball of radius $m$ in $G$ contains a set $A$ of cardinality $s_m(G/H,XH/H)$ with each element belonging to a distinct coset of $H$. The products $ax$ with $a\in A$ and $x\in B_n(G,X)\cap H$ are then distinct elements of the ball of radius $m+n$.
\end{proof}

In the expression $n/2^c c^2$ below and others like this,
we write $x/yz$ to mean $x/(yz)$.

\begin{proof}[Proof of \cref{prop:nilp.min.growth}]
On passing to the quotient of $G$ by its torsion subgroup, we may assume that $G$ is torsion-free.
If $n < 2^{d}$, then $n^d < 2^{d^2}$, whence $s_n(G,X)\ge1 > n^d/2^{d^2}$ and the proposition is satisfied. We may therefore assume that $n\ge2^{d}$.

If $G$ is abelian, then every generating set contains $d$ independent elements that generate a free abelian subgroup $H$ of rank $d$, hence $s_n(G) \ge s_n(H) > n^d/d!$: To see this lower bound, consider only the part of the ball with all coordinates strictly positive.  For integers $x_i > 0$ with $\sum_{i=1}^d x_i \le n$, let $C_x$ be the unit cube $\prod_{i=1}^d (x_i - 1, x_i]$, where $x = (x_1, \ldots, x_d)$. These cubes are disjoint. Suppose $z = (z_1, \ldots, z_d)$ is a real point in the pyramid where $z_i > 0$ for all $i$ and $\sum_{i=1}^d z_i \le n - d$. Then $z$ lies in the cube $C_w$, where $w := (\ceil{z_1}, \ldots, \ceil{z_d})$. Clearly $\sum_{i=1}^d \ceil{z_i} \le n$.  Therefore, the number of such $x$ is at least the volume of this pyramid, which is $(n-d)^d/d! \ge (n/2)^d/d!$. Considering all elements of the ball of radius $n$ with no coordinates equal to $0$ gives the claimed lower bound, $n^d/d!$.
Since $d! < 2^{d^2}$, the proposition holds when $G$ is abelian.


We now prove the proposition by induction on $d$.
The base case, $d = 1$, follows because the only torsion-free such group is the infinite cyclic group, which is abelian.

We now assume that $G$ is nonabelian.

Write $c=\cl(G)$. Because $G$ is nonabelian, $c \ge 2$,
so that $c + 2\log_2 c \le 1 + c(c+1)/2 \le d$ in light of \cref{lem:class.vs.deg}, whence $2^{d} \ge 2^c c^2$.

By \cref{lem:G_s=<comms>}, there exist elements $x_1,\ldots,x_c\in X$ such that $[x_1,\ldots,x_c]\ne1$. Set $H:=\langle[x_1,\ldots,x_c]\rangle$. Given $n\in\N$, we claim first that
\begin{equation}\label{eq:S.cap.H}
|B_{\lfloor n/2\rfloor}(G,X)\cap H|\ge\frac{n^c}{2^{c(c+1)}c^{2c}}.
\end{equation}
Given $L\in\N$, for every integer $k=1,\ldots,L^c$ there exist $m\le c$ and integers $\ell_{11},\ldots,\ell_{1c},\ldots,$ $\ell_{m1},\ldots,\ell_{mc}\in[1,L]$
such that 
$k=\sum_{i=1}^m\prod_{j=1}^c\ell_{ij}$, as we can see by writing $k$ in base $L$. 
\cref{lem:comms.linear} therefore implies that for every such $k$ we have
\[
[x_1,\ldots,x_c]^k=[x_1^{\ell_{11}},\ldots,x_c^{\ell_{1c}}]\cdots[x_1^{\ell_{m1}},\ldots,x_c^{\ell_{mc}}]\in B_{c\lambda(c)L}(G,X)\cap H,
\]
so that $|B_{c\lambda(c)L}(G,X)\cap H|\ge L^c$. Setting $L:=\lfloor n/2^cc^2\rfloor$ and noting that $c\lambda(c)L\le n/2$ by \eqref{eq:l(k)}, we deduce that $|B_{\lfloor n/2\rfloor}(G,X)\cap H|\ge\lfloor n/2^cc^2\rfloor^c$. Since $n\ge2^{d}\ge 2^cc^2$, we have $\lfloor n/2^cc^2\rfloor\ge n/2^{c+1}c^2$, so this proves \eqref{eq:S.cap.H} as claimed.

The degree of polynomial growth of $G/H$ is $d-c < d$, so by induction we may assume that
\[
s_{\lceil n/2\rceil}(G/H,XH/H)\ge\frac{(n/2)^{d-c}}{2^{(d-c)^2}}
=
\frac{n^{d-c}}{2^{(d-c)^2 + d-c}}.
\]
Combining this with \eqref{eq:S.cap.H} and \cref{lem:lb.quotient.kernel},
we deduce that
\[
s_n(G,X)\ge
\frac{n^d}{2^{(d-c)^2 + d-c+ c(c+1)}c^{2c}}
=
\frac{n^d}{2^{(d-c)^2 + d+ c^2 + 2c \log_2 c}}.
\]
It remains to show that $(d-c)^2 + d+c^2 + 2c \log_2 c \le d^2$, in other words, that 
\[
2c(c + \log_2 c) \le (2c - 1)d.
\]
Now
\[
1 + \frac1{c-1}
\le
2\log_2 c
\]
because $c\ge2$. Multiply both sides by $c-1$, add $2c \log_2 c - c + 2c^2$ to both sides, factor the right-hand side, and use the inequality $c + 2 \log_2 c \le d$ established above to get the desired result.
\end{proof}

We now move on to the proof of the more general \cref{thm:v.nilp.min.growth}, writing $g(k)$ from now on for the maximum order of a finite subgroup of $\GL_k(\Z)$, as in that theorem. It is not too difficult to deduce from \cref{prop:nilp.min.growth} a version of \cref{thm:v.nilp.min.growth} in which the lower bound on $s_n(G,X)$ has some dependence on the index of a nilpotent subgroup. The key to removing this dependence is the following result, which is essentially \cite[Theorem 9.8]{mann.book}.

\begin{prop}\label{prop:index.bound}
Suppose that $G$ is a finitely generated virtually nilpotent group. Then there exist normal subgroups $H,N\trianglelefteq G$ with $H\le N$ finite and $[G:N]\le g\bigl(h(G)\bigr)$ such that $N/H$ is torsion-free nilpotent.
\end{prop}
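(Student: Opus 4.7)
The statement is essentially \cite[Theorem 9.8]{mann.book}, and my plan would follow that proof. Since $G$ is finitely generated virtually nilpotent, hence polycyclic, it satisfies the maximal condition on subgroups. As the product of two finite normal subgroups is again finite and normal, $G$ has a unique maximum finite normal subgroup $H$, and $\bar G:=G/H$ contains no nontrivial finite normal subgroup. Taking the normal core in $\bar G$ of a finite-index nilpotent subgroup produces a normal nilpotent subgroup $L\trianglelefteq\bar G$ of finite index; its torsion subgroup $T(L)$ is finite and characteristic in $L$, hence normal in $\bar G$, and therefore trivial, so $L$ is torsion-free nilpotent of Hirsch length $h:=h(G)$. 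Letting $N$ be the preimage of $L$ in $G$, we have $N\trianglelefteq G$, $H\le N$, and $N/H=L$ torsion-free nilpotent, as required; it remains only to bound $[G:N]=[\bar G:L]$.

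Each term $\zeta_i(L)$ of the upper central series of $L$ is characteristic in $L$, hence normal in $\bar G$, and $\zeta_i(L)/\zeta_{i-1}(L)$ is finitely generated torsion-free abelian of rank $a_i$, with $\sum_i a_i=h$. Conjugation induces a homomorphism $\rho\colon\bar G\to\prod_i\GL_{a_i}(\Z)\hookrightarrow\GL_h(\Z)$. Since $[L,\zeta_i(L)]\subseteq\zeta_{i-1}(L)$, we have $L\subseteq\ker\rho$, so $\rho(\bar G)$ is a finite subgroup of $\GL_h(\Z)$, of order at most $g(h)$. It thus suffices to show that the map $\rho$ has kernel exactly equal to $L$.

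To that end, observe that an element of $\ker\rho$ acts unipotently on $L$ via conjugation; an induction on the nilpotency class of $L$, exploiting that the centre of a torsion-free nilpotent group is torsion-free, shows that any finite-order unipotent automorphism of $L$ is trivial. Consequently, if $\ker\rho\supsetneq L$, then $C_{\bar G}(L)\supsetneq L\cap C_{\bar G}(L)=Z(L)$, and one must analyse the central extension $1\to Z(L)\to C_{\bar G}(L)\to Q\to 1$, where $Q$ is a nontrivial finite subgroup of $\bar G/L$. The main obstacle is completing this step: one must use the polycyclic structure of $\bar G$ and the fact that $C_{\bar G}(L)$ is normal in $\bar G$ to extract from this centralizer a nontrivial finite normal subgroup of $\bar G$, contradicting the maximality of $H$. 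Mann's structural analysis of centralizers in polycyclic groups carries out precisely this extraction, closing the argument and yielding $\ker\rho=L$ and $[G:N]\le g(h(G))$.
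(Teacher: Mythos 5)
The paper's own proof is deliberately short: it invokes \cite[Theorem 9.8]{mann.book} as a black box (noting that the bound $g(h(G))$, rather than $g(\deg G)$, is visible in Mann's proof), and then repairs the one deficiency -- that Mann's $N/H_0$ may not be torsion-free -- by enlarging $H_0$ to the preimage in $N$ of the (finite, characteristic) torsion subgroup of $N/H_0$. Your proposal instead attempts to reconstruct Mann's argument from scratch. That is a legitimate ambition, and your opening steps (unique maximal finite normal subgroup $H$, passage to $\bar G=G/H$ which has no nontrivial finite normal subgroup, extraction of a torsion-free normal nilpotent $L$ of finite index, and the conjugation representation $\rho\colon\bar G\to\prod_i\GL_{a_i}(\Z)$ on the upper central factors) match the structure of Mann's proof. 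Your handling of torsion is also slightly different from the paper's but equally valid: you kill all torsion at the start via the maximal finite normal subgroup, rather than repairing it at the end.

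However, there is a genuine gap, and the step where you try to bridge it contains a faulty inference. You assert that because finite-order unipotent automorphisms of a torsion-free nilpotent group are trivial, $\ker\rho\supsetneq L$ would force $C_{\bar G}(L)\supsetneq Z(L)$. This does not follow: an element $g\in\ker\rho\setminus L$ has finite order modulo $L$, but the automorphism $\mathrm{conj}_g|_L$ need not have finite order in $\Aut(L)$ -- it is only finite order in $\mathrm{Out}(L)$ -- so the ``finite-order unipotent $\Rightarrow$ trivial'' fact is not applicable to it. The correct route (which is what Mann actually does) is different: show that $\ker\rho$ is itself nilpotent, by noting that $\ker\rho/Z(L)$ embeds in the stability group of the upper central series of $L$ (nilpotent by Kaloujnine/P.~Hall) and that $Z(L)$ is central in $\ker\rho$; then, taking $L$ to be the Fitting subgroup of $\bar G$, conclude $\ker\rho\le L$. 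This in turn uses that $C_{\bar G}(L)=Z(L)$, which you would prove via Schur's theorem together with the absence of finite normal subgroups in $\bar G$. Note also that for $\ker\rho=L$ you need $L$ to be the \emph{maximal} normal nilpotent subgroup, not merely the normal core of an arbitrary finite-index nilpotent subgroup as you take it; with your choice of $L$ you should instead set $N$ to be the preimage of $\ker\rho$. You explicitly acknowledge the gap and defer to ``Mann's structural analysis,'' which is honest, but be aware that the specific implication you wrote down in the sentence beginning ``Consequently'' is not correct as stated, so the repair is not merely a matter of filling in a routine step.
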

\begin{proof}
This is almost given by \cite[Theorem 9.8]{mann.book}, which says that there exist normal subgroups $H_0,N\trianglelefteq G$ with $H_0\le N$ finite and $[G:N]\le g\bigl(\deg(G)\bigr)$ such that $N/H_0$ is nilpotent. The stronger bound $[G:N]\le g\bigl(h(G)\bigr)$ claimed here can be read directly out of the proof of \cite[Theorem 9.8]{mann.book}, but $N/H_0$ may still not necessarily be torsion-free. Nonetheless, being of finite index in $G$, the subgroup $N$ is also finitely generated \cite[1.6.11]{robinson}, so the torsion subgroup of $N/H_0$ is finite. This subgroup is characteristic in $N/H_0$, and hence normal in $G/H_0$, so its pullback $H$ to $N$ is finite and normal in $G$ and satisfies the proposition.
\end{proof}

\begin{proof}[Proof of \cref{thm:v.nilp.min.growth}]
Write $j := g\bigl(h(G)\bigr)$. Since $s_n(G,X)\ge1$, the theorem is trivial for $n\le2j$, so we may assume from now on that $n\ge2j$. Let $H$ and $N$ be the normal subgroups given by \cref{prop:index.bound}. It suffices to prove the result for $G/H$, so we may assume that $H=\{1\}$ and hence that $N$ is a normal nilpotent subgroup of index at most $j$ in $G$. The ball of radius $j-1$ in $G$ contains a complete set $A$ of coset representatives for $N$ \cite[Lemma 11.2.1]{tointon.book}. The set $Y:=\{axb^{-1}: a,b\in A,\,x\in X\cup X^{-1},\,axb^{-1}\in N\}$ is then a generating set for $N$ (see the proof of \cite[1.6.11]{robinson} or of \cite[Lemma 7.2.2]{hall}) and is contained in the ball of radius $2j-1$ in $G$. We therefore have
\[
s_n(G,X)\ge s_{\lfloor n/2j\rfloor}(N,Y)\ge\frac{\lfloor n/2j\rfloor^d}{2^{d^2}} 
\]
by \cref{prop:nilp.min.growth}. The fact that $n\ge2j$ implies in particular that $\lfloor n/2j\rfloor\ge n/4j$, giving the desired bound.
\end{proof}

\section{Detailed statement and proof of the main theorem}
Our main result is as follows.

\begin{theorem}\label{thm:effective.tt}
Let $C$ be the constant appearing in \cref{thm:st}, and let $d\in\N$.
Suppose $G$ is a group with finite generating set $X$ and that
\[
s_n(G, X)<\frac{n^{d}}{2^{3C^{4d}}g(C^{d})^{C^{2d}}}
\]
for some positive integer $n\ge\exp(\exp(Cd^C))$. Then $G$ has a nilpotent subgroup of index $O_{n,d}(1)$, and $\deg(G)\le d-1$, where the bound on the index is the same as the bound on the index given by \cref{thm:st}.
\end{theorem}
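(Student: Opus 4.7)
\medskip

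\noindent\textbf{Proof proposal.} The plan is to combine \cref{thm:st} with \cref{thm:v.nilp.min.growth} by a one-line contradiction: the Shalom--Tao theorem produces a virtually nilpotent structure together with upper bounds on $\deg(G)$ and $h(G)$, and then the explicit lower bound of \cref{thm:v.nilp.min.growth} refuses to be smaller than the hypothesised ball size unless $\deg(G)\le d-1$.

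Concretely: since the denominator $2^{3C^{4d}}g(C^d)^{C^{2d}}$ is at least $1$, the hypothesis implies $s_n(G,X)\le n^d$, and together with $n\ge\exp(\exp(Cd^C))$ this puts us exactly in the situation of \cref{thm:st}. That theorem then yields that $G$ has a nilpotent subgroup of index $O_{n,d}(1)$ (the same bound on the index advertised in the conclusion), of Hirsch length and class at most $C^d$, and in particular $\deg(G)\le C^{2d}$.

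It remains to upgrade $\deg(G)\le C^{2d}$ to $\deg(G)\le d-1$. Suppose for contradiction that $D:=\deg(G)\ge d$. Since $G$ is virtually nilpotent of polynomial growth of degree exactly $D$, \cref{thm:v.nilp.min.growth} (with $d$ replaced by $D$) gives
\[
s_n(G,X)\;\ge\;\frac{n^D}{2^{D(D+2)}\,g(h(G))^D}.
\]
Using the Shalom--Tao bounds $D\le C^{2d}$ and $h(G)\le C^d$, together with the fact that $g$ is non-decreasing (a finite subgroup of $\GL_k(\Z)$ embeds block-diagonally in $\GL_{k+1}(\Z)$), one has $n^D\ge n^d$, $g(h(G))^D\le g(C^d)^{C^{2d}}$, and $D(D+2)=D^2+2D\le C^{4d}+2C^{2d}\le 3C^{4d}$ (the last step uses $C\ge 2$ and $d\ge 1$, so $C^{2d}\ge 2$ and hence $2C^{2d}\le C^{4d}$). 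Substituting yields $s_n(G,X)\ge n^d/\bigl(2^{3C^{4d}}g(C^d)^{C^{2d}}\bigr)$, contradicting the hypothesis. Since $\deg(G)$ is an integer for virtually nilpotent groups (Bass--Guivarc'h), we conclude $\deg(G)\le d-1$.

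There is essentially no obstacle: the whole theorem reduces to routine arithmetic once both ingredients are in place. The only mild subtlety is engineering the explicit exponents $3C^{4d}$ and $C^{2d}$ in the hypothesis so that the upper bounds on $D$ and $h(G)$ supplied by Shalom--Tao feed cleanly into the lower bound of \cref{thm:v.nilp.min.growth}; that is precisely what the choice made in the statement accomplishes.
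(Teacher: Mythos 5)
Your proposal is correct and follows essentially the same route as the paper: apply \cref{thm:st} to obtain the nilpotent subgroup together with the bounds $h(G)\le C^d$ and $\deg(G)\le C^{2d}$, then feed these into \cref{thm:v.nilp.min.growth} and compare the resulting lower bound for $s_n(G,X)$ with the hypothesis. The only cosmetic difference is that you phrase the final step as a contradiction whereas the paper simply applies the lower bound at $m=n$ and reads off $q<d$; the arithmetic verifying $q(q+2)\le 3C^{4d}$ and the monotonicity of $g$ is the same in substance.
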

\begin{proof}
\cref{thm:st} implies that $G$ has a nilpotent subgroup of index $O_{n,d}(1)$, Hirsch length at most $C^d$, and growth degree $q\le C^{2d}$. \cref{thm:v.nilp.min.growth} then implies that
\[
s_m(G,X)\ge\frac{m^q}{2^{3C^{4d}}g(C^{d})^{C^{2d}}}
\]
for every $m\in\N$. Applying this with $m=n$ shows that $q<d$.
\end{proof}

\begin{proof}[Proof of \cref{cor:effective.min.growth}]
The hypothesis of \cref{thm:tt} is not satisfied for any $n<\exp(\exp(Cd^C))$ if $\eps_d$ is as stated, so \cref{thm:effective.tt} applies in every non-vacuous instance of the hypothesis.
\end{proof}

\section{Stronger bounds for nilpotent groups}
If $G$ is assumed a priori to be nilpotent, then we can improve the bounds of \cref{cor:min.growth} quite substantially. Given $d\in\N$, write
\[
f(d):=\frac1{2^{d^2}}
\]
(the constant appearing in \cref{prop:nilp.min.growth}).
\begin{prop}\label{prop:nilp.deg>d} Let $d\in\N$, and suppose that $G$ is a finitely generated nilpotent group of growth degree at least $d$ and $X$ is a finite generating set for $G$. Then $s_n(G,X) \ge f(\flr{7d/4})n^d$ for all $n\in\N$.
\end{prop}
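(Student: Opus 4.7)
The plan is to reduce to \cref{prop:nilp.min.growth} by passing to a carefully chosen quotient $G/K$ whose growth degree $d''$ lies in the interval $[d,\lfloor 7d/4\rfloor]$. Once such a quotient is obtained, using $s_n(G,X) \ge s_n(G/K,XK/K)$, the monotonicity of $f$, and the elementary bound $n^{d''} \ge n^d$ valid for $n \in \N$ and $d'' \ge d$, an application of \cref{prop:nilp.min.growth} to $G/K$ gives
\[
s_n(G,X) \ge f(d'')\,n^{d''} \ge f(\lfloor 7d/4\rfloor)\,n^d,
\]
which is exactly the claim.

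To build $K$, first replace $G$ with $G/T$, where $T$ is the torsion subgroup, so that $G$ may be assumed torsion-free. Writing $r(i)$ for the torsion-free rank of $\gamma_i(G)/\gamma_{i+1}(G)$ and $q_j := \sum_{i=1}^j i\,r(i)$, let $k$ be the smallest positive integer with $q_k \ge d$; this $k$ exists and is at most $\cl(G)$, because the Bass--Guivarc'h formula gives $q_{\cl(G)} \ge d$ by hypothesis. Set $s := \lceil (d-q_{k-1})/k\rceil$; since $d-q_{k-1}$ is a positive integer bounded by $k\,r(k)$, one has $1 \le s \le r(k)$. Because $\gamma_k(G)/\gamma_{k+1}(G)$ is a finitely generated abelian group of torsion-free rank $r(k)$, I can pick a subgroup $K$ of $\gamma_k(G)$ with $\gamma_{k+1}(G) \le K$ and $\gamma_k(G)/K \cong \Z^s$. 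The inclusion $[G,\gamma_k(G)] \le \gamma_{k+1}(G) \le K$ shows $K$ is normal in $G$, and computing the lower central series of $G/K$ establishes that $G/K$ is nilpotent of class $k$ with growth degree exactly $d'' := q_{k-1} + ks$.

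It remains to verify $d \le d'' \le \lfloor 7d/4\rfloor$. The lower bound $d'' \ge d$ is immediate from $ks \ge d-q_{k-1}$, while $ks \le d-q_{k-1}+k-1$ (since $d-q_{k-1} \ge 1$ is an integer) gives $d'' \le d+k-1$. The main obstacle is therefore the arithmetic inequality $k \le \lfloor 7d/4\rfloor - d + 1$. By \cref{lem:ab>2}, $r(i)\ge 1$ for every $1 \le i \le \cl(G)$, so $q_{k-1} \ge (k-1)k/2$; combined with $q_{k-1} \le d-1$, this yields $k \le (1+\sqrt{8d-7})/2$. Direct inspection handles $d \in \{1,2,3\}$, and for $d \ge 4$ the inequality $9d^2-44d+32 \ge 0$ (whose roots are $8/9$ and $4$) gives $(1+\sqrt{8d-7})/2 \le 3d/4 \le \lfloor 7d/4\rfloor - d + 1$, completing the proof. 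I expect the construction of $K$ as a normal subgroup tailored to realize a prescribed growth degree to be the conceptual heart of the argument, with the verification of the sharp coefficient $7/4$ constituting the technical core.
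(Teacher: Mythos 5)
Your proof is correct, and it is a genuine variant of the paper's argument rather than a transcription of it. The paper proceeds by induction on $\deg(G)$: it kills a cyclic subgroup of $\gamma_{\cl(G)}(G)$ at each step, dropping the growth degree by $\cl(G)$, until it reaches a base case $d\le\deg(G)<d+\cl(G)$, and then shows $\deg(G)\le 7d/4$ by a short case analysis via \cref{lem:class.vs.deg}. You instead construct in one step a normal subgroup $K$ with $\gamma_{k+1}(G)\le K\le\gamma_k(G)$ and $\gamma_k(G)/K\cong\Z^s$, so that $G/K$ has growth degree $d''=q_{k-1}+ks\in[d,\lfloor7d/4\rfloor]$, and then apply \cref{prop:nilp.min.growth} to $G/K$. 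In effect you build directly the group at which the paper's induction terminates. Both arguments hinge on the same quadratic estimate for $k$ (resp.\ for the class) coming from \cref{lem:ab>2}, so the $7/4$ constant appears for the same underlying reason; the arithmetic at the end is essentially the paper's case analysis in different clothing. Your normality check for $K$ (via $[G,\gamma_k(G)]\le\gamma_{k+1}(G)\le K$), the computation of $\deg(G/K)$ through the lower central quotients, and the verification that $s\le r(k)$ so that a suitable $K$ exists even when $\gamma_k(G)/\gamma_{k+1}(G)$ has torsion, are all correct. The net gain of your version is that it avoids the induction and makes explicit where the constant comes from; the paper's version is shorter because it outsources the construction to the inductive step. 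Either is a valid proof.
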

\begin{proof} We prove the proposition by induction on $\deg(G)$. We may assume as usual that $G$ is torsion-free. We write $c$ for the class of $G$. For the induction step we assume that $\deg(G)\ge d+c$ and that the proposition has been proven for all groups of growth degree smaller than $\deg(G)$. In that case, let $x\in\gamma_c(G)$ be a non-identity element so that $N = \langle x\rangle$ is a central subgroup and $\deg(G/N) = \deg(G) - c$. The induction hypothesis then implies that $s_n(G,X) \ge s_n(G/N,XN) \ge f(\flr{7d/4})n^d$, as claimed.

It remains to prove the base cases of the induction, in which $d\le\deg(G)<d+c$. These are easy to treat on a case-by-case basis. If $d = 1$, then $G$ is infinite, so $s_n(G,X)\ge n$ and the proposition holds. We may therefore assume that $d\ge2$, so that $r(1)\ge2$ by \cref{lem:ab>2} and the class $c$ of $G$ satisfies
\begin{equation}\label{eq:c<sqrt(2r-2)}
c<\sqrt{2\deg(G)-2}
\end{equation}
by \cref{lem:class.vs.deg}. If $d=2$, then $G$ possesses a free abelian quotient of rank $2$ because $r(1)\ge2$, so the proposition holds by \cref{prop:nilp.min.growth}. The proposition holds similarly if $d=3$ and $r(1)=3$. If $d=3$ and $r(1)=2$, then $c\ge2$, so that $r(2)\ge1$ by \cref{lem:ab>2}. This implies that $\deg(G/\gamma_3(G))\ge4$, and hence that $s_n(G,X)\ge s_n(G/\gamma_3(G),X\gamma_3(G))\ge f(4)n^4$ by \cref{prop:nilp.min.growth}, and the proposition holds.

We may therefore assume that $d \ge 4$. We claim in this case that $\deg(G)\le7d/4$, which by \cref{prop:nilp.min.growth} is sufficient to prove the proposition. If $\deg(G)\le7$, then this claim is immediate.  If $\deg(G)=8$ or $9$, then \eqref{eq:c<sqrt(2r-2)} shows that $c \le 3$, and hence that $\deg(G)<7d/4$ as claimed. Finally, if $\deg(G) \ge 10$, then \eqref{eq:c<sqrt(2r-2)} implies that $c < 3\deg(G)/7$, again giving $\deg(G)<7d/4$.
\end{proof}

A similar proof establishes the following version of the above result.

\begin{prop} Given a number $\alpha > 1$, there exists an (explicitly computable) number $K = K(\alpha)$ such that if $G$ is a finitely generated nilpotent group of growth degree at least $d \ge K$ and $X$ is a finite generating set for $G$, then $s_n(G) \ge f(\flr{\alpha d})n^d$ for all $n \ge 1$.
\end{prop}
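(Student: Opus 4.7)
The plan is to mimic the structure of the proof of \cref{prop:nilp.deg>d} but replace its ad hoc case analysis by a single explicit quadratic computation that determines $K(\alpha)$. After passing to the torsion-free quotient (which preserves $\deg$ and only decreases $s_n$), I will induct on $\deg(G)$, with $d$ and $\alpha$ fixed throughout. For the inductive step, if $\deg(G)\ge d+c$ where $c=\cl(G)$, I pick a non-identity $x\in\gamma_c(G)$, set $N:=\langle x\rangle$, observe that $N$ is a central subgroup with $\deg(G/N)=\deg(G)-c\in[d,\deg(G))$, and apply the inductive hypothesis to $G/N$ to conclude $s_n(G,X)\ge s_n(G/N,XN)\ge f(\lfloor\alpha d\rfloor)n^d$.

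The work therefore lies in the base cases, namely those for which $d\le D<d+c$ with $D:=\deg(G)$. The plan is to show that $D\le\lfloor\alpha d\rfloor$ whenever $d\ge K(\alpha)$, at which point \cref{prop:nilp.min.growth} yields
\[
s_n(G,X)\ge\frac{n^D}{2^{D^2}}\ge\frac{n^d}{2^{\lfloor\alpha d\rfloor^2}}=f(\lfloor\alpha d\rfloor)n^d,
\]
since $n^D\ge n^d$ for $n\ge1$ (as $D\ge d$) and $2^{D^2}\le 2^{\lfloor\alpha d\rfloor^2}$ (as $D\le\lfloor\alpha d\rfloor$). To bound $D$, I will combine $D-d<c$ with the bound $c^2\le c(c+1)\le 2D-2<2D$ coming from \cref{lem:class.vs.deg} (applicable provided $D\ge d\ge 2$, which I guarantee by demanding $K(\alpha)\ge2$); squaring $D-d<\sqrt{2D}$ and solving the resulting quadratic in $D$ then gives $D<d+1+\sqrt{2d+1}$.

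The remaining step is to choose $K(\alpha)$ so that $d+1+\sqrt{2d+1}\le\alpha d$ whenever $d\ge K(\alpha)$. Rearranging to $\sqrt{2d+1}\le(\alpha-1)d-1$ and squaring (both sides being nonnegative for the relevant $d$) reduces this to $(\alpha-1)^2d^2\ge 2\alpha d$, i.e., $d\ge 2\alpha/(\alpha-1)^2$. I will therefore take
\[
K(\alpha):=\max\left\{2,\ \left\lceil\frac{2\alpha}{(\alpha-1)^2}\right\rceil\right\}.
\]
There is no real obstacle in this proof: everything reduces to the elementary quadratic computation above plus the inductive scaffolding already developed for \cref{prop:nilp.deg>d}. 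The only aspect requiring care is ensuring that the squaring step is valid (both sides nonnegative), which is automatic once $d\ge K(\alpha)$, since $2\alpha/(\alpha-1)^2>1/(\alpha-1)$ for $\alpha>1$.
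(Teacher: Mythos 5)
Your proof is correct and follows essentially the same route as the paper's: reduce to the base cases $\deg(G) < d + c$ via the induction from \cref{prop:nilp.deg>d}, use \cref{lem:class.vs.deg} to obtain the quadratic relation between $\deg(G)$ and $d$, and then invoke \cref{prop:nilp.min.growth}. The only difference is cosmetic — the paper chooses $K$ so that $r - \sqrt{2r-2} \ge r/\alpha$ for $r \ge K$ and applies this at $r = \deg(G)$, whereas you solve the quadratic in $\deg(G)$ to get $\deg(G) < d + 1 + \sqrt{2d+1}$ and then constrain $d$; your version has the small bonus of yielding the explicit formula $K(\alpha) = \max\{2, \lceil 2\alpha/(\alpha-1)^2 \rceil\}$, which the paper leaves implicit.
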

\begin{proof} Choose $K = K(\alpha)>1$ such that if $r \ge K$, then $r - \sqrt{2r-2} \ge r/\alpha$. Let $G$ be a finitely generated nilpotent group of class $c\in\N$ and growth degree at least $d \ge K$, and let $X$ be a finite generating set for $G$. We may assume as usual that $G$ is torsion-free. By the inductive argument of \cref{prop:nilp.deg>d}, we need only consider the base cases in which $\deg(G)<d+c$. Since $d>1$, \eqref{eq:c<sqrt(2r-2)} gives
$d>\deg(G)-\sqrt{2\deg(G)-2}\ge\deg(G)/\alpha$ and the claim holds by \cref{prop:nilp.min.growth}.
\end{proof}

\section{Vertex-transitive graphs}

%

In this section we prove \cref{cor:VT.min.growth.detailed}. We first provide some brief background on vertex-transitive graphs. For convenience we provide references to the third and fourth authors' paper \cite{tt.trof}, although most of what we describe is classical. See \cite{tt.trof} for more detailed background, including further references.

Let $\Gamma=(V,E)$ be a vertex-transitive graph. Given a subgroup $G\le\Aut(\Gamma)$ and a vertex $x\in V$, we write $G(x)$ for the orbit of $x$ under $G$, and $G_x$ for the stabiliser of $x$ in $G$. Note that if $G$ acts transitively on $V$, then its vertex stabilisers are all conjugate to one another; in particular, they all have the same cardinality.

Given a subgroup $H\le\Aut(\Gamma)$, we define the quotient graph $\Gamma/H$ to have vertex set $\{H(x):x\in V\}$, with $H(x)$ and $H(y)$ connected by an edge if and only if there exist $x_0\in H(x)$ and $y_0\in H(y)$ that are connected by an edge in $\Gamma$. Note in this case that $s_n(\Gamma/H)\le s_n(\Gamma)$ for all $n\in\N$. If $G$ is another subgroup of $\Aut(\Gamma)$, we say that the quotient graph $\Gamma/H$ is \emph{invariant under the action of $G$ on $\Gamma$} if for every $g\in G$ and $x\in V$, there exists $y\in V$ such that $gH(x)=H(y)$. If $H$ is normalised by $G$, then $\Gamma/H$ is invariant under the action of $G$, and the action of $G$ on $\Gamma$ descends to an action of $G$ on the vertex-transitive graph $\Gamma/H$ \cite[Lemmas 3.1 \& 3.2]{tt.trof}. When $\Gamma/H$ is invariant under $G$, we write $G_{\Gamma/H}$ for the image of $G$ in $\Aut(\Gamma/H)$ induced by this action; thus $G_{\Gamma/H}$ is the quotient of $G$ by the normal subgroup $\{g\in G:gH(x)=H(x)\text{ for every }x\in\Gamma\}$.

The automorphism group $\Aut(\Gamma)$ of the vertex-transitive graph $\Gamma$ is a topological group with the topology of pointwise convergence, which is metrisable \cite[\S4]{tt.trof}. A subset $U\subseteq\Aut(\Gamma)$ is relatively compact if and only if has a finite orbit, if and only if all its orbits are finite \cite[Lemma 4.7]{tt.trof}.

The following result allows us to study the growth of a vertex-transitive graph in terms of the growth of a closed transitive group of automorphisms.
\begin{lemma}[{\cite[Lemma 4.8]{tt.trof}}]\label{lem:growth.stabs}
Let $k\in\N$. Suppose $\Gamma$ is a connected, locally finite vertex-transitive graph and $G\le\Aut(\Gamma)$ is a closed transitive subgroup acting with vertex stabilisers of order $k$. Then there exists a finite generating set $X$ for $G$ such that $s_n(G,X)=k\cdot s_n(\Gamma)$ for all $n\in\N$.
\end{lemma}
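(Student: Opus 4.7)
The plan is to construct $X$ so that the orbit map $g \mapsto g(o)$ becomes a $k$-to-$1$ covering of balls from the Cayley graph of $(G,X)$ to $\Gamma$. Fix a base vertex $o$ and set
\[
X := \{g \in G : g(o) = o \text{ or } g(o) \sim o\}.
\]
This $X$ is finite: it decomposes as a union of cosets of $G_o$ indexed by $o$ and its finitely many neighbours in the locally finite graph $\Gamma$, each coset of cardinality $k$. It is symmetric because automorphisms preserve edges (so $g(o) \sim o$ iff $o \sim g^{-1}(o)$), and it contains $G_o$.

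To verify that $X$ generates $G$, I would use a standard path-lifting argument. Given $g \in G$, pick a geodesic $o = v_0 \sim v_1 \sim \cdots \sim v_n = g(o)$ in $\Gamma$ and lift it by choosing $h_j \in G$ with $h_j(o) = v_j$ and $h_0 = 1$. Each $x_j := h_{j-1}^{-1} h_j$ lies in $X$, because $x_j(o) = h_{j-1}^{-1}(v_j) \sim h_{j-1}^{-1}(v_{j-1}) = o$, and $g = (x_1 x_2 \cdots x_n) \cdot (h_n^{-1} g)$ expresses $g$ as a word in $X$ since the trailing factor $h_n^{-1} g$ lies in $G_o \subseteq X$.

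The central step is to show that the orbit map $\phi(g) := g(o)$ restricts on each ball to a surjection $B_n(G, X) \twoheadrightarrow B_n(\Gamma, o)$ whose fibres all have cardinality exactly $k$. The containment $\phi(B_n(G, X)) \subseteq B_n(\Gamma, o)$ follows by induction on $n$, because each letter of $X$ displaces any vertex by at most one edge and automorphisms are graph isometries. Surjectivity is precisely the path-lifting construction above, which for each $v \in B_n(\Gamma, o)$ produces an element of $B_n(G, X)$ mapping $o$ to $v$. Since $\phi^{-1}(v)$ is a single coset of $G_o$ and therefore has cardinality $k$ in $G$, the only remaining task is to prove the reverse containment $\phi^{-1}(v) \subseteq B_n(G, X)$.

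I expect this last containment to be the main technical point, and I would handle it via an absorption trick based on the right-invariance of $X$ under $G_o$: for $x \in X$ and $h \in G_o$ we have $xh(o) = x(o)$, so $xh \in X$. Hence for any word $x_1 \cdots x_m \in B_n(G, X)$ with $m \ge 1$ and any $h \in G_o$, we may replace the final letter $x_m$ by $x_m h \in X$ to obtain a word of length $m$ representing $x_1 \cdots x_m \cdot h$, which therefore lies in $B_n(G, X)$ as well. In the degenerate case $v = o$ (where one may take $g = 1$), the full coset $G_o$ is already contained in $X = B_1(G, X) \subseteq B_n(G, X)$ for every $n \in \N$. Summing $k$ over all $v \in B_n(\Gamma, o)$ then yields $s_n(G, X) = |B_n(G, X)| = k \cdot |B_n(\Gamma, o)| = k \cdot s_n(\Gamma)$, as claimed.
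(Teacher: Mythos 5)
Your proof is correct, and it is the standard argument for this lemma (the paper simply quotes it from \cite{tt.trof}): take $X$ to be the union of the left cosets $\{g : g(o) = v\}$ over $o$ and its neighbours, then combine path-lifting with right-$G_o$-absorption to show that the orbit map $g \mapsto g(o)$ restricts to a $k$-to-$1$ surjection from each $B_n(G,X)$ onto $B_n(\Gamma,o)$.

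One small imprecision is worth tightening. The phrase ``each letter of $X$ displaces any vertex by at most one edge'' is not literally true: a generator $x \in X$ is only guaranteed to move the base point $o$ by at most one edge, and may displace far-away vertices by a large distance. The forward containment $\phi(B_n(G,X)) \subseteq B_n(\Gamma,o)$ should instead be argued by transporting that single bound along the word: if $g \in B_{n-1}(G,X)$ and $x \in X$, then
\[
d\bigl(o, gx(o)\bigr) \le d\bigl(o, g(o)\bigr) + d\bigl(g(o), g(x(o))\bigr) = d\bigl(o, g(o)\bigr) + d\bigl(o, x(o)\bigr) \le (n-1) + 1 = n,
\]
where the middle equality uses that $g$ acts as a graph isometry. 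Your follow-up clause ``and automorphisms are graph isometries'' suggests this is exactly what you had in mind, so this is a matter of wording rather than a gap; the rest of the argument, including the disjoint-coset count that gives $|B_n(G,X)| = k\,|B_n(\Gamma,o)|$, is sound.
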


\begin{proof}[Proof of \cref{cor:VT.min.growth.detailed}]
Let $G$ be a closed transitive subgroup of $\Aut(\Gamma)$ (for example $\Aut(\Gamma)$ itself). Since $\Gamma$ has polynomial growth, Trofimov's theorem as presented in \cite[Theorem 2.1]{tt.trof} shows that there is a compact normal subgroup $H_0\lhd G$ such that $G_{\Gamma/H_0}$ is virtually nilpotent and acts on $\Gamma/H_0$ with finite vertex stabilisers. Since orbits under $H_0$ are finite, $\Gamma/H_0$ has the same growth degree as $\Gamma$, so it suffices to prove the corollary for $\Gamma/H_0$. We may therefore assume that $H_0$ is trivial, and hence that $G$ itself is virtually nilpotent of growth degree $d$ and acts on $\Gamma$ with finite vertex stabilisers.

\cref{prop:index.bound} implies that there exist normal subgroups $H,N\lhd G$, with $H\le N$ finite and $[G:N]\le g(d)$, such that $N/H$ is torsion-free nilpotent of growth degree $d$. Write $\pi\colon G\to G_{\Gamma/H}$ for the quotient homomorphism. It is shown in \cite[Lemma 3.5]{tt.trof} that if $x$ is a vertex of $\Gamma$, then the stabiliser $(G_{\Gamma/H})_{H(x)}$ is precisely $\pi(G_x)$. In particular, $(G_{\Gamma/H})_{H(x)}$ is a homomorphic image of $G_x/(G_x\cap H)$, so that
\[
|(G_{\Gamma/H})_{H(x)}|\le[G_x:G_x\cap H].
\]
Since $N/H$ is torsion-free and $G_x$ is finite, it must be the case that $G_x\cap N\subseteq H$, and hence in particular that $G_x\cap N\subseteq G_x\cap H$. This shows that $G_x/(G_x\cap H)$ is isomorphic to a quotient of $G_x/(G_x\cap N)$, which is itself isomorphic to a subgroup of $G/N$, and so we may conclude that
\[
[G_x:G_x\cap H]\le[G:N]\le g(d).
\]
It therefore follows from \cref{thm:v.nilp.min.growth,lem:growth.stabs} that
\[
s_n(\Gamma)\ge s_n(\Gamma/H)\ge\frac1{|(G_{\Gamma/H})_{H(x)}|}\cdot\frac{n^d}{2^{d(d+2)}g(d)^d}\ge\frac{n^d}{2^{d(d+2)}g(d)^{d+1}},
\]
as required.
\end{proof}

By combining the third and fourth authors' result \cite[Corollary 1.5]{tt.trof} and \cref{cor:VT.min.growth.detailed}, one can obtain the following partially effective statement.

\begin{corollary}\label{cor:VT.min.almostEffective}
Let $d\in\N$, and suppose $\Gamma$ is a vertex-transitive graph with  degree  of growth at least $d$. Then there exists $n_0=n_0(d)\in\N$ such that
\[
s_n(\Gamma)\ge\frac{n^d}{2^{d(d+2)}g(d)^{d+1}}
\]
for every integer $n\geq n_0$.
\end{corollary}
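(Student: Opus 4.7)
The plan is to split on whether $\deg(\Gamma)$ equals $d$ exactly or strictly exceeds $d$. Since $\deg(\Gamma) \ge d$ by hypothesis, and since (by Trofimov) the growth degree of a vertex-transitive graph of polynomial growth is always a non-negative integer, these two cases exhaust all possibilities (with the case $\deg(\Gamma) = +\infty$ of superpolynomial growth falling into the second one).

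In the case $\deg(\Gamma) = d$, I would simply invoke \cref{cor:VT.min.growth.detailed}, which gives the desired bound for \emph{every} $n \in \N$, so no threshold $n_0$ is needed. In the case $\deg(\Gamma) > d$, I would apply the vertex-transitive extension \cite[Corollary 1.5]{tt.trof} of \cref{thm:tt.orig} with parameter $d+1$: its contrapositive provides a (non-explicit) constant $\eps_{d+1} > 0$ such that $s_n(\Gamma) \ge \eps_{d+1} n^{d+1}$ for every $n \in \N$ (otherwise we would force $\deg(\Gamma) \le d$). Choosing $n_0 = n_0(d)$ large enough that $\eps_{d+1} n_0 \ge 1/\bigl(2^{d(d+2)} g(d)^{d+1}\bigr)$ then gives $s_n(\Gamma) \ge \eps_{d+1} n^{d+1} \ge n^d / \bigl(2^{d(d+2)} g(d)^{d+1}\bigr)$ for all $n \ge n_0$.

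Combining the two cases yields the corollary with this uniform threshold $n_0(d)$. The only source of ineffectiveness in $n_0(d)$, and the only real obstacle to making the whole statement fully effective, is the constant $\eps_{d+1}$ from \cite[Corollary 1.5]{tt.trof}: that result ultimately rests on Carolino's structure theorem for locally compact approximate groups, which is not known to admit effective bounds. Everything else in the argument is completely explicit, and indeed the constant appearing in the bound is precisely the one from \cref{cor:VT.min.growth.detailed}.
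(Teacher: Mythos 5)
Your argument is correct and uses the same two ingredients as the paper's proof, namely the vertex-transitive analogue \cite[Corollary 1.5]{tt.trof} of \cref{thm:tt.orig} together with \cref{cor:VT.min.growth.detailed}, so it is essentially the paper's proof. The only difference is cosmetic: the paper splits on whether $s_n(\Gamma)\le n^d$ for some $n\ge n_0$ (if yes, \cite[Corollary 1.5]{tt.trof} gives $\deg(\Gamma)\le d$, hence $\deg(\Gamma)=d$, and \cref{cor:VT.min.growth.detailed} applies; if no, the trivial bound $s_n(\Gamma)>n^d\ge n^d/(2^{d(d+2)}g(d)^{d+1})$ for all $n\ge n_0$ already finishes), whereas you split on whether $\deg(\Gamma)$ equals or strictly exceeds $d$ and, in the latter case, do the small extra step of converting the lower bound $s_n(\Gamma)\ge\eps_{d+1}n^{d+1}$ into a threshold $n_0$ that absorbs the constant. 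Your diagnosis that the sole source of ineffectiveness in $n_0(d)$ is Carolino's locally compact structure theorem underlying \cite[Corollary 1.5]{tt.trof} also matches the paper's remark following the corollary.
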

\begin{proof}
By \cite[Corollary 1.5]{tt.trof}, there exists $n_0=n_0(d)$ such that if $s_n(\Gamma)\le n^d$ for some $n\geq n_0$, then $\deg(\Gamma)\le d$. If no such $n$ exists, then there is nothing to prove. Else, we can apply \cref{cor:VT.min.growth.detailed}.
\end{proof}

The value of $n_0=n_0(d)\in\N$ provided by the proof remains ineffective.

\appendix

\section{Universal gap in percolation}\label{appendix}
Here we sketch the details of how to explicitly bound the quantities in the proofs of Panagiotis and Severo \cite{pan-sev} to derive \cref{cor:gap}. We will not optimize our calculations; rather, we will aim for conciseness in the final result. It suffices to prove the inequalities for site percolation \cite[Proposition 7.10]{ly-per}. 

Before we consider the arguments of Panagiotis and Severo, we first consider a result that they quote from elsewhere, namely, \cite[Theorem 3.20]{hutch-toint}. The next few paragraphs are intended to be read in conjunction with \cite{hutch-toint}; all notation and terminology is as in that paper, and theorem references are also to that paper.

The proof of Theorem 3.20 shows that if $\Gamma$ is a Cayley graph of a group that is not virtually cyclic but contains a nilpotent subgroup of index at most $n\in\N$, then
there is a Cayley graph $G_1:=(V_1,E_1):=\Cay(H_0,H\cap S_0^{2n-1})$ of valency at most $(8n-4)^{2n-1}$, as well as a Cayley graph $G_2:=(V_2,E_2):=\Cay(\Gamma_0,S_0)$ of valency at most $8n-4$ that is a subgraph of $\Gamma$, such that
\[
\Prob_{1-(1-p^{1/C})^C}^{G_2,\rm{bond}}[o \leftrightarrow \infty]\ge\Prob_p^{G_1,\rm{bond}}[o \leftrightarrow \infty]\ge\Prob_p^{\Z^2,\rm{bond}}[o \leftrightarrow \infty]
\]
for all $p\in[0,1]$. Here, $C$ is the constant given by applying Lemma 2.10 with $\phi$ equal to the $(2n-1,1)$-rough embedding $G_1\to G_2$ induced by the inclusion map $H_0\to\Gamma_0$ appearing in the proof of Theorem 3.20, and $\Z^2$ has its usual Cayley graph. We will show in the next paragraph that we may take $C$ equal to $U := 2(8n-3)^{3n-2}$ 
in this case, so that
\[
\Prob_{1-(1-p^{1/U})^U}^{G_2,\rm{bond}}[o \leftrightarrow \infty]\ge\Prob_p^{\Z^2,\rm{bond}}[o \leftrightarrow \infty]
\]
for all $p\in[0,1]$. It follows from \cite[Proposition 7.11]{ly-per} that
\[
\Prob_{1-(1-p^{1/U})^{(8n-4)U}}^{G_2,\rm{site}}[o \leftrightarrow \infty]\ge\bigl(1-(1-p^{1/U})^{(8n-4)U}\bigr)\Prob_{1-(1-p^{1/U})^U}^{G_2,\rm{bond}}[o \leftrightarrow \infty].
\]
Since $G_2$ is a subgraph of $\Gamma$, we may combine the previous two displays to conclude that
\[
\Prob_{1-(1-p^{1/U})^{(8n-4)U}}^{\Gamma,\rm{site}}[o \leftrightarrow \infty]\ge\bigl(1-(1-p^{1/U})^{(8n-4)U}\bigr)\Prob_p^{\Z^2,\rm{bond}}[o \leftrightarrow \infty]
\]
for all $p\in[0,1]$, and hence
\begin{equation}\label{eq:nilp.perc}
\Prob_{1-(1-p^{1/U})^{(8n-4)U}}^{\Gamma,\rm{site}}[o \leftrightarrow \infty]\ge\bigl(1-(1-p^{1/U})^{(8n-4)U}\bigr)\Bigl(2-\frac1p\Bigr)
\end{equation}
for all $p\in[\frac12,1]$ by \cite[Theorem 1.1]{dc-tas}.

To see that we may indeed take $C=U$, and hence verify \eqref{eq:nilp.perc}, we need to bound two quantities by $U$. First, given an edge $e_1\in E_1$, we need to show that $|\Phi(e_1)|\le U$ in the notation of the proof of Lemma 2.10. To see this, note that if $x$ and $y$ are the endpoints of $e_1$, then a shortest path connecting $\phi(x)$ and $\phi(y)$ has length at most $2n$, so every edge in such a path has at least one endpoint at distance at most $n-1$ from either $\phi(x)$ or $\phi(y)$. There are at most $2(8n-3)^{n-1}$ vertices at distance at most $n-1$ from either $\phi(x)$ or $\phi(y)$, so there are at most $2(8n-3)^n$ such edges, and so $|\Phi(e_1)|\le2(8n-3)^n\le U$ as required. Second, given an edge $e_2\in E_2$, we need to show that $|\{e_1 \in E_1 \,\colon\; e_2 \in \Phi(e_1) \}|\le U$. To see this, note that if $e_2\in\Phi(e_1)$ for some $e_1\in E_1$, and if $u$ and $v$ are the endpoints of $e_2$ and $x$ and $y$ are the endpoints of $e_1$, then at least one of $\phi(x)$ and $\phi(y)$ must be within distance $n-1$ of either $u$ or $v$.
There are at most $2(8n-3)^{n-1}$ vertices at distance at most $n-1$ from either $u$ or $v$, so since $\phi$ is injective, there are at most $2(8n-3)^{n-1}(8n-4)^{2n-1}{}<{}U$ possibilities for $e_1$, as required.

The remainder of this appendix is intended to be read in conjunction with \cite{pan-sev}, and we adopt the notation of that paper except in two explicitly noted cases in the next sentence.

Replace their (3.4) by our \Cref{cor:return.prob}, which we will write as $p_n(x, y) \le \gamma_k (D/n)^{k/2}$ with $\gamma_k := 8 k^{(k+5)/2} \eps_k^{-1} \ue^{-k/2}$; here only we use our notation $\eps_k$, in which we will use (our) $C = 100$.  Although \cite{pan-sev} uses a nonlazy simple random walk, they apply such a bound only to bound the Green function, and adding laziness simply multiplies the Green function by 2, which means that we will end up with slightly larger bounds than necessary. This gives their (3.5) with $C'' = \gamma_k$ if we choose $k = 2r+2$.

In their Lemma 3.5, we have $c_n = (4n)^{-n}$ because $t_n=1/(4^nn!)>(4n)^{-n}$ for $n\ge2$.

The proof of Theorem 3.3 is broken into several cases. For the first case, we choose the same $D_0 = 2^{r^2+5}/c_{r^2+2}$ as they do and get that for $D < D_0$ and dimension at least $2r$, the inequality 
\begin{equation}\label{e.whatC}
p_n(x, y) \le C/Dn^r
\end{equation}
holds for all $n \ge 1$ when $C = \gamma_{2r} D_0^{r+1}$. In the remaining cases, $D \ge D_0$ and the dimension is at least $2r+2$. For the second case, we have \eqref{e.whatC} for all $n \ge D^r$  and $C = \gamma_{2r+2}$. For the third case, they note that $p_n(x, y) \le 1/D^{r^2+1}$ for $n \ge 1 + \int_1^{4D^{r^2+1}} \frac{16 \,\ud u}{u/\bigl(16(r^2+2)\bigr)^2} = 1 + 16^3(r^2+2)^2 \log \bigl(4D^{r^2+1}\bigr)$, so we may
set $t := 16^3 (r^2+2)^3$ to get \eqref{e.whatC} with $C = 1$ when $t \log D \le n < D^r$.
For the fourth case, we have \eqref{e.whatC} with $C = 3^r$
when $1 \le n \le 3$. For the fifth (last) case, we have \eqref{e.whatC}
when $4 \le n < t \log D$ and $C \ge \max_{3 \le D < D_0} 6(t \log D)^r/D$. Now use 
\begin{equation}\label{e.max}
\max_{u > 0} u \ue^{-u/r} = r/\ue
\end{equation}
to see that we may take $C = 6(tr/\ue)^r$. Comparing all these cases shows that in their Theorem 3.3, we may take $C_1(r) = \gamma_{2r+2}$ and $d(r) = 2r+2$.

We next turn to the proof of their Theorem 3.1. We have just seen that $d_0 = d(3) = 8$. They take $\eps := \ue^{-M}$ where $M$ is the bound in their (3.6) of the sum $t_\infty := \log 2 + C_0 \sum_{n=1}^\infty s_n$ with $s_n := (16D)^{L_n - 1} \int_{\lambda_n}^\infty \rho^n(\lambda) \,\ud\lambda$. We may take any $C_0 \ge 16/a$ with $a := \mathbb{P}[\phi_x^1 \le \lambda_1] > 1/250$, whence we may take $C_0 = 4000$. For $n = 1$, we have $s_n < 1$
because $g_1(x, x) = 1$ and $L_1 = 1$.
Now let $n \ge 2$. With $C_1 = C_1(3) = \gamma_8$, we have $g_n(x, x) = \sum_{k = L_n+1}^{L_{n+1}} p_k(x, x) \le \frac{C_1}D \sum_{k = L_n+1}^{L_{n+1}} \frac1{k^3} < \frac{C_1}{2DL_n^2}$. Thus, $\int_{\lambda_n}^\infty \rho^n(\lambda) \,\ud\lambda = \mathbb{P}[N \ge \lambda_n/\sqrt{g_n(x, x)}\,] < \sqrt{C_1/4D\pi} \cdot (n-1)^2/L_n \cdot \exp\bigl\{-DL_n^2/C_1(n-1)^4\bigr\}$ (this uses the tail bound $\mathbb{P}[N \ge \alpha] < (\sqrt{2\pi}\alpha)^{-1} \ue^{-\alpha^2/2}$).
Use \eqref{e.max} to get $D\exp\bigl\{-DL_n/C_1(n-1)^4\bigr\} \le (n-1)^4 C_1/\ue L_n$, and thus $s_n \le v_n \sqrt{C_1/108\pi} \frac{(n-1)^2}{L_n}$, where $v_u := \Bigl(\frac{16(u-1)^4 C_1}{\ue L_u}\Bigr)^{L_u}$ and $L_u := 2^{u+1}-3$ for real $u \ge 1$. Calculus shows that $\log v_n \le L_{u_*}$, where $u_*$ maximizes $v_u$ over all $u \ge 1$
(indeed, the critical point $u_*$ occurs where $0 = 2^{u+1} \log 2 \cdot \log \frac{16(u-1)^4 C_1}{\ue L_u} + L_u \bigl(\frac4{u-1} - \frac{2^{u+1} \log 2}{2^{u+1} - 3}\bigr)$, whence $\log \frac{16({u_*}-1)^4 C_1}{\ue L_{u_*}} = \bigl(1 - 3/2^{u_*+1}\bigr)\bigl(1/(1 - 3/2^{u_*+1}\bigr) - 4/(u_*-1)\bigr) < 1$).
Furthermore, we find that $u_* < 2 \log_2 C_1$
(indeed, letting $\tilde u := 2 \log_2 C_1$, we have $L_{\tilde u} = 2C_1^2 - 3$, and hence $\frac{16(\tilde u-1)^4 C_1}{\ue L_{\tilde u}} = \frac{16(2 \log_2 C_1-1)^4 C_1}{\ue (2 C_1^2 - 3)} < 1$; looking again at the derivative of $\log v_u$, we conclude that $u \mapsto \log v_u$ is decreasing at $\tilde u$, whence $u_* < \tilde u$).
It follows that $L_{u_*} < 2 C_1^2 - 3 < 2 C_1^2$, which yields $v_n < \ue^{2C_1^2}$.
Because $\sum_{n=2}^\infty (n-1)^2/\sqrt{108\pi} L_n < 1$, we find that we may take any $M \ge \log 2 + C_0\bigl(1 + \sqrt{C_1} \ue^{2C_1^2}\bigr)$. This gives that $M := \exp\bigl\{17 \exp\bigl\{10 \cdot 8^{100}\bigr\}\bigr\}$ works.

Finally, in the proof of their Theorem 1.1, we see that for dimensions at least $d_0 = d(3) = 8$, we can use $\eps_0 = \eps = \ue^{-M}$, while for smaller dimensions, we can use $\eps_0 = \eps\bigl(g(8)\bigr)$, where $\eps(n)$ is the quantity coming from Theorem 2.3 and we used \cite[Theorem 1]{bs} and our \cref{prop:index.bound}. Our \eqref{eq:g(d)} implies that $\eps\bigl(g(8)\bigr) \ge\eps(16!)$, while our \eqref{eq:nilp.perc} implies that $\eps(n)$ can be taken to be $(1-(1/2)^{1/U})^{(8n-4)U}$, where recall that $U = 2(8n-4)^{3n-2}$. The inequality $\ue^u \ge 1 + u$, valid for all real $u$, implies in particular that $1-\ue^{-u} \ge u/(1 + u)$ for all $u>-1$; applying this, we then see that $\eps(n) \ge \bigl((2U)^{-1} \log 2\bigr)^{(8n-4)U}$. Therefore, $\eps(16!)>\eps$, whence $\eps_0 = \eps$ can be used for all groups.

Now we turn to the second assertion of \cref{cor:gap}. The proof of \cite[Theorem 1.1]{pan-sev} shows that $\mathbb{P}_{1 - \eps_0}[o \leftrightarrow \infty] \ge \mathbb{P}[o \xleftrightarrow{\varphi> -1} \infty]$ when the dimension is at least $d_0$. By \cite[Proposition 2.1]{DGRSY}, we have that $\mathbb{P}[o \xleftrightarrow{\varphi> -1} \infty] \ge 1 - \exp\{D/g(o, o)\}$; that reference is in terms of a particular bond percolation, but it is easy to see that it also bounds the probability for site percolation for the  superlevel set of $\varphi$. Now $g(o,o) = \sum_{n = 1}^\infty g_n(o,o)$. Our explicit bounds above show therefore that $g(o,o) < 1 + \frac{C_1}{2D} \sum_{n=2}^\infty L_n^{-2} < C_1/25D$. Using the value above for $C_1$ implies that
\[
\mathbb{P}_{p_0}[o \leftrightarrow \infty]
>
1 - \exp\bigl\{-D^2 \exp\bigl\{-9 \exp\{100 \cdot 8^{100}\}\bigr\}\bigr\}
>
\exp\bigl\{-9 \exp\{100 \cdot 8^{100}\}\bigr\}
\]
for dimension at least $d_0$. For dimension less than $d_0$, we may again apply our \eqref{eq:nilp.perc} with $n=16!$; taking $p=2/3$, for example, yields
\[
\Prob_{1-(1-(2/3)^{1/U})^{(8n-4)U}}^{\Gamma,\rm{site}}[o \leftrightarrow \infty]\ge\frac12\left(1-(1-(2/3)^{1/U})^{(8n-4)U}\right)>\frac13.
\]
Since $(1-(2/3)^{1/U})^{(8n-4)U}\ge ((2U)^{-1} \log \frac32)^{(8n-4)U}>\eps$ by essentially the same computation as in the previous paragraph, this completes the proof.

\section*{Acknowledgements}
We are grateful to Emmanuel Breuillard, David Fisher, and Tom Hutchcroft for discussions, and an anonymous referee for comments on an earlier draft. Parts of this work were originally conducted under separate projects by non-intersecting subsets of the authors; we thank G\'abor Pete for making us aware of each other's work.

\end{document}